\documentclass[review,onefignum,onetabnum]{siamart171218}
\makeatletter
\let\cref@override@label@type\@gobbletwo
\makeatother

% Information that is shared between the article and the supplement
% (title and author information, macros, packages, etc.) goes into
% ex_shared.tex. If there is no supplement, this file can be included
% directly.

% \input{ex_shared}

% Optional PDF information
% \ifpdf
% \hypersetup{
%     % \title{ Illuminating Accuracy with Learning-guided Refinement: A Warm-basis Iterative method for Fluorescence Molecular Tomography}
%     % \author{***}
%   pdftitle=Illuminating Accuracy with Learning-guided Refinement: A Warm-basis Iterative method for Fluorescence Molecular Tomography,
%   pdfauthor={***}
% }
% \fi
\AtBeginDocument{\nolinenumbers} 
% The next statement enables references to information in the
% supplement. See the xr-hyperref package for details.

%% Use \myexternaldocument on Overleaf
% \myexternaldocument{ex_supplement}
\usepackage{amssymb}
% % \usepackage{amsfonts}
% \usepackage{graphicx}
%\usepackage{hyperref} % must be before cleveref
% Language setting
% Replace `english' with e.g. `spanish' to change the document language
% \usepackage[english]{babel}
%\usepackage{algorithm}
% % \usepackage{algpseudocode}
\usepackage{algorithmic}
% % \usepackage{cite}
\usepackage{booktabs}
\usepackage{multirow}
\usepackage{subcaption}
\usepackage{comment}
%\usepackage{showlabels}
% Set page size and margins
% Replace `letterpaper' with `a4paper' for UK/EU standard size
% \usepackage[letterpaper,top=2cm,bottom=2cm,left=3cm,right=3cm,marginparwidth=1.75cm]{geometry}
\usepackage{enumitem}
% Useful packages
\newcommand{\bx}{\boldsymbol{x}}
\newcommand{\by}{\boldsymbol{y}}

\newcommand{\bG}{\boldsymbol{G}}
\newcommand{\bK}{\boldsymbol{K}}
\newcommand{\bZ}{\boldsymbol{Z}}

\newcommand{\br}{\boldsymbol{r}}

\newcommand{\bA}{\boldsymbol{A}}
\newcommand{\bb}{\boldsymbol{b}}
\newcommand{\bd}{\boldsymbol{d}}

\newcommand{\bv}{\boldsymbol{v}}
\newcommand{\bu}{\boldsymbol{u}}
\newcommand{\bg}{\boldsymbol{g}}
\newcommand{\bh}{\boldsymbol{h}}

\newcommand{\bz}{\boldsymbol{z}}
\newcommand{\bU}{\boldsymbol{U}}
\newcommand{\bV}{\boldsymbol{V}}

\newcommand{\bT}{\boldsymbol{T}}
\newcommand{\bR}{\boldsymbol{R}}
\newcommand{\bQ}{\boldsymbol{Q}}
\newcommand{\bI}{\boldsymbol{I}}
\newcommand{\bL}{\boldsymbol{L}}

\newcommand{\bB}{\boldsymbol{B}}
\newcommand{\bD}{\boldsymbol{D}}

\newcommand{\bbeta}{\boldsymbol{\eta}}

\newcommand{\norm}[1]{\left\lVert#1\right\rVert}

% \newtheorem{thexorem}{Theorem}[section]
%\newtheorem{lemma}{Lemma}[section]
% \newtheorem{proposition}[theorem]{Proposition}
%\newtheorem{problem}{Problem}[section]
%\newtheorem{remark}[theorem]{Remark}
% Theorems, lemmas, etc.
% after \documentclass and your \newsiamthm declarations
% --- after \documentclass, before \begin{document} ---
% Define SIAM theorem-like environments only if not already defined
% \newtheorem{definition}[theorem]{Definition}
% \newtheorem{corollary}[theorem]{Corollary}
\newsiamthm{problem}{Problem}
\newsiamremark{remark}{Remark}

\newcommand{\RG}[1]{{\color{red}{#1}}}

% Declare title and authors, without \thanks
\newcommand{\TheTitle}{A Warm-basis Method for Bridging Learning and Iteration: a Case Study in Fluorescence Molecular Tomography}

\title{ \TheTitle
}

\author{
  Ruchi Guo\thanks{Department of Mathematics, Sichuan University, China
    (\email{ruchiguo@scu.edu.cn}). This author is partially supported by NSFC 12571436 and NSF DMS-2309778.}
    \and 
   Jiahua Jiang\thanks{School of Mathematics, University of Birmingham, UK
    (\email{j.jiang.3@bham.ac.uk}).}
  \and
  Bangti Jin\thanks{Department of Mathematics, The Chinese University of Hong Kong, Shatin, N.T., Hong Kong
    (\email{b.jin@cuhk.edu.hk}).}
  \and
  Wuwei Ren\thanks{School of Information Science and Technology, ShanghaiTech University, China
    (\email{renww@shanghaitech.edu.cn}).}
     \and
  Jianru Zhang\thanks{School of Mathematics, University of Birmingham, UK
    (\email{jxz389@student.bham.ac.uk}).}
  }

\allowdisplaybreaks
\pdfoutput=1
\begin{document}

\renewcommand{\thefootnote}{} % no label
\footnotetext{\textsuperscript{\#}Alphabetical order}
\renewcommand{\thefootnote}{\arabic{footnote}} % restore if needed

\maketitle
\begin{abstract}
Fluorescence Molecular Tomography (FMT) is a widely used non-invasive optical imaging technology in biomedical research. 
It usually faces significant accuracy challenges in depth reconstruction, and conventional
iterative methods struggle with poor $z$-resolution even with advanced regularization. 
Supervised learning approaches can improve recovery accuracy but rely on large, high-quality paired training dataset that is often impractical to acquire in practice. 
This naturally raises the question of how learning-based approaches can be effectively combined with iterative schemes to yield more accurate and stable algorithms.
In this work, we present a novel warm-basis iterative projection method (WB-IPM) and establish its theoretical underpinnings.
The method is able to achieve significantly more accurate reconstructions than the learning-based and iterative-based methods. 
In addition, it allows a weaker loss function depending solely on the directional component of the difference between ground truth and neural network output, thereby substantially reducing the training effort.
These features are justified by our error analysis as well as simulated and 
real-data experiments. 
\end{abstract}
% REQUIRED

\begin{keywords}
linear inverse problem, fluorescence molecular tomography, hybrid projection methods,  flexible Golub-Kahan, deep learning
\end{keywords}

% REQUIRED
\begin{AMS}
  65J22, 65F10, 68T07
\end{AMS}

\section{Introduction}

This work is concerned with numerically solving linear inverse problems, and the primary motivation arises from fluorescence molecular tomography (FMT)
that is a medical imaging technique known for its high sensitivity, noninvasiveness, and low cost. 
It has been widely used in various applications, including drug development, preclinical diagnosis, treatment monitoring and small animal research etc \cite{ntziachristos2002fluorescence,ren2022noninvasive,ozturk2020high,Hu2020First}. 
FMT enables the three-dimensional visualization of the internal distribution of fluorescent targets (e.g., tumors and lymph nodes) excited by near-infrared light, based on measured surface-emitted fluorescence;
see \cref{fig:FMT_intro}(a) for an illustration.
However, the strong scattering of light in biological tissues, along with the restricted light penetration, results in noisy and limited boundary measurements \cite{arridge1999optical,arridge2009optical}. 
This leads to significant loss in depth-specific information, i.e., the poorer $z$-axis resolution, compared to the better reconstruction quality in the other two directions.
In this work, we develop and analyze a novel warm-basis iterative method that draws on both learning and iterative refinement to improve reconstruction accuracy.

\begin{figure}[ht!]
    \centering
    \includegraphics[width=1\linewidth]{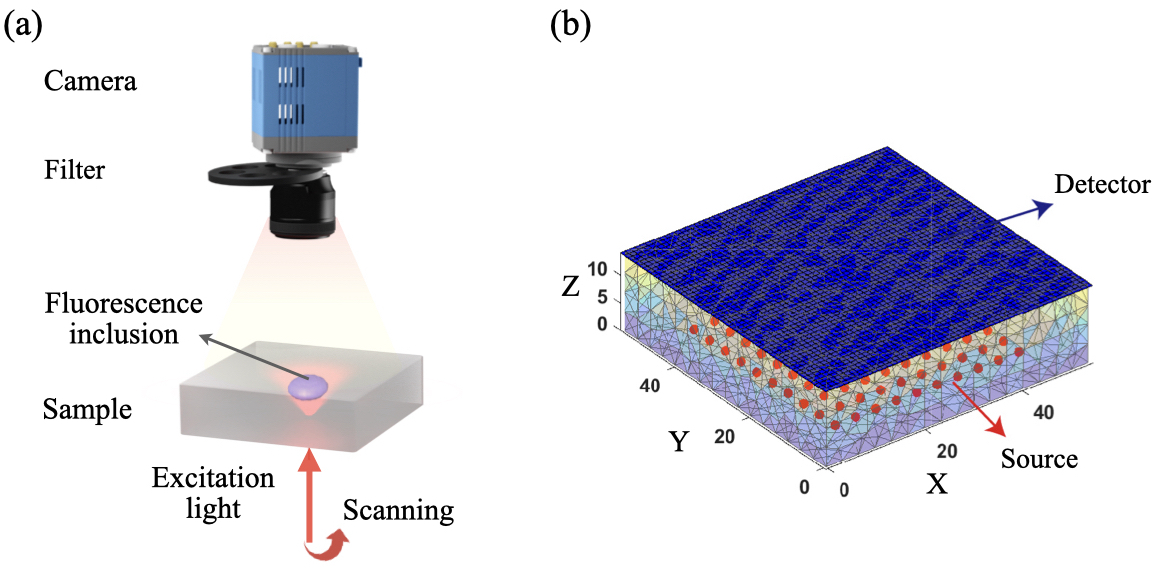}
    \caption{(a) Schematic illustration of FMT system: a laser beam scans the tissue sample from the bottom to excite fluorescent inclusions and emit fluorescence, and emitted photons propagate to the top surface and are collected by a camera. (b) Numerical simulation setup for FMT using a slab phantom: a $55\times55$ detector array (blue patches) is placed on the top surface to record photon intensity, while a $10\times10$ array of laser sources (red dots) illuminates the sample from the bottom surface.}
    \label{fig:FMT_intro}
\end{figure}

The proposed method is broadly applicable to linear inverse problems of the form:
\begin{equation}
\label{eq_LIP}
\bb = \bA \bx^* + \bbeta ,
\end{equation}
where the goal is to reconstruct the desired solution $\bx^*$, given the forward map $\bA \in \mathbb{R}^{M \times N}$ and measurement data $\bb \in \mathbb{R}^{M}$ that may include noise $\bbeta$. In the FMT application, $\bb$ and $\bx^*$ denote the surface fluorescence measurements and the internal fluorophore distribution, respectively.

The FMT problem is severely ill-posed \cite{arridge2009optical} and usually solved via regularization \cite{ItoJin:2015}.
Since the fluorescence targets (e.g., early-stage tumors and tagged biomarkers) are typically small, and sparse relative to the surrounding biological tissue \cite{hong2017near,weissleder2008imaging},
the sparsity promoting $\ell_1$ penalty is widely use \cite{ChenDonoho:1998,JinMaass:2012,JinMaassScherzer:2017}:
\begin{equation}
    \min_{\bx} \norm {\bA \bx - \bb}_2^2 + \lambda^2 \norm {\bx }_1.
    \label{eq:l1_regular}
\end{equation}
Nevertheless, solving the $\ell_1$-regularized problem is computationally demanding, 
due to its nonsmoothness \cite{rodriguez2008efficient,JinMaass:2012}. 
Popular algorithms to solve the $\ell_1$ minimization problem include Bregman iterations \cite{YinOsher:2008}, 
fast iterative soft-thresholding algorithms (FISTA) \cite{beck2009fast}, and alternating direction method of multipliers (ADMM) \cite{YangZhang:2011,kavuri2012sparsity} etc.
Krylov-type methods seek the solution within subspaces formed by repeatedly applying the matrix $\bA$ to the initial basis. In this work, we consider hybrid projection methods \cite{jiang2021hybrid,chung2019flexible, chung2017generalized} that project the problem onto low-dimensional subspaces via Golub-Kahan process.  However, conventional iterative methods struggle to recover the $z$-direction information, as the ``Depth Blur'' stems from small singular values of $\bA$ and is difficult to resolve by regularization alone. 

Recently deep learning has been widely used for inverse problems \cite{arridge2019solving,Ongie:2020}.
 Supervised learning using deep neural networks \cite{adler2017solving,adler2018learned,2020GuoJiang,2023GuoJiangLi,2023GuoCaoChen,CenJinZhou:2023} employs an end-to-end reconstruction paradigm and takes advantage of big data to recover the information lost in the physical model (e.g., depth in FMT reconstruction). These approaches have demonstrated impressive empirical results across a wide variety of applications in terms of reconstruction speed and accuracy, including FMT \cite{meng2020k,cao2022excitation,zhang2023d2}.
However, they often encounter reduced accuracy in real-world scenarios due to limitations of training dataset (see \cref{fig:exp_res}), e.g., distributional mismatch and noise contamination.

The inherent limitations in both approaches raise one fundamental question: How can learning and iterative methods be combined to achieve more accurate and stable algorithms?
A natural idea to include the prior information is ``warm start'', 
which has been explored to improve the efficiency of classical iterative methods. The network outputs are used as effective initial guesses for Newton-type solvers \cite{huang2020int,zhou2023neural}
(in order to trigger their quadratic convergence) or the conjugate gradient method \cite{ZhangLiuDong:2020}.
%It is natural to ask what additional advantages this strategy may provide, such as improved accuracy or enhanced training efficiency.
%In fact, stably improving the accuracy beyond the network output is non-trivial. 
In enriched Krylov subspaces \cite{jiang2021hybrid, hansen2019hybrid, calvetti2003enriched}, the prior information is treated as another basis vector for solving least squares problems. Meanwhile, our 3D FMT results show that directly using the network output as an initial guess to solve (\ref{eq:l1_regular}) may even degrade performance; see \cref{fig:ws_wb_compare} and Section \ref{sec:wb-ipm} for further discussions. Thus, one must carefully design the iterative scheme such that correct information of the network prediction can be preserved and the rest can be corrected by the iteration.

In this work, we develop a new technique by decomposing the whole space into the network output and its orthogonal complement, inspired by residual analysis, rather than merely augmenting the solution space with a prior-informed basis. It essentially exploits distinct roles of the two spaces and allows the flexible hybrid projection method to efficiently search for the optimal solution within the complement. The analysis also supports the design of new training loss, and shows that the regularization parameters associated with the network output can be flexibly set to small values while still achieving performance on par with more sophisticated parameter choice rules.
Specifically, to exploit the synergy of learning and iteration, we propose a new \textit{warm-basis} iterative projection method (WB-IPM) in \cref{alg:WB-frame}. 
Our main contributions include
\begin{enumerate}
\item {\bf Learning benefits iteration.} The Attention U-Net generates a “warm basis” that, when combined with a novel alternating solver, substantially improves reconstruction accuracy compared with both learning-based and iterative methods, with notable gains in the $z$-direction.

\item {\bf Theoretical guarantees.} We establish that the performance of WB-IPM depends only on the angle between the true solution and the network output, up to noise and regularization terms. 

\item {\bf Iteration benefits learning.} The analysis motivates a weaker angle-based loss, greatly improving training efficiency while preserving the reconstruction quality of iterative refinement (see \cref{fig:convergence_history}). 

\end{enumerate} 
In sum, WB-IPM is tolerant to inaccuracies in the learned warm basis, can provide stable refinement even from imperfect network outputs.
We illustrate these features through simulated and real-world data, cf. \cref{fig:FMT_intro} and \cref{fig:exp_res}.

The paper is organized as follows. In Section \ref{sec:method}, we review the majorization-minimization approach and introduce the attention U-Net for generating warm basis. 
In Section \ref{sec:wb-ipm}, we present WB-IPM, including space decomposition, warm-basis alternating solver and AFGK iterative method. The theoretical analysis is given in Section \ref{sec:assessment}, followed by simulated and experimental results in Section \ref{sec:results}, and conclusions in Section \ref{sec:conclusion}.

\section{Preliminary} \label{sec:method}
In this section, we first describe the general iterative scheme based on an majorization-minimization (MM) and introduce the attention-type network to generate the warm basis. 

\subsection{MM approach}
\label{sec:irn}
A range of methods have been developed to solve the $\ell_1$-regularized problem, including iterative shrinkage algorithms and iterative reweighted norms \cite{gorodnitsky1992new,  rodriguez2008efficient, beck2009fast, daubechies2010iteratively}. 
We employ the MM approach, which reformulates (\ref{eq:l1_regular}) into a sequence of reweighted least-squares problems \cite{lange2016mm}. Throughout we  fix $\lambda$. For a given $\varepsilon > 0$, we approximate the absolute value $|x|$ by $ \varphi_{\varepsilon}(x) = \sqrt{x^2 + \varepsilon},$
and accordingly, approximate the $\ell_1$ norm by $\|\bx\|_1 \approx \sum_{j=1}^{N} \varphi_{\varepsilon}(x_j),$
where $x_j$ denotes the $j$th element of $\bx$. The smoothed objective is given by
% In this subsection, we present a brief overview of the MM approach, which converts (\ref{eq:l1_regular}) as a sequence of reweighted least-squares problems. For this discussion, we assume that $\lambda$ is fixed. For some $\varepsilon > 0$, we consider $\lvert \bx \rvert \approx \varphi_{\varepsilon}(\bx) = \sqrt{\bx^2 + \varepsilon}$ and approximate $\|\bx\|_1 \approx \sum_{j=1}^{n} \varphi_{\varepsilon}(x_j)$, where $x_j$ is the $j$th component of $\bx$. The corresponding objective function is
\begin{equation}
    f_{\varepsilon}(\bx) = \|\bA \bx - \bb\|_2^2 + \lambda^2 \sum_{j=1}^{N} \varphi_{\varepsilon}(x_j).
    \label{prob:mmprob1}
\end{equation}
Let $\bx^{(k)}$ be the iterate at the $k$th step of the MM approach. Then the following majorization relationship holds \cite[(1.5)]{lange2016mm}
\begin{equation}
    \varphi_{\varepsilon}(x) = \sqrt{x^2 + \varepsilon} \leq \sqrt{(x^{(k)})^2 + \varepsilon} + \frac{1}{2\sqrt{(x^{(k)})^2 + \varepsilon}} (x^2 - (x^{(k)})^2) =: \psi_{\varepsilon}(x \mid x^{(k)}),
\end{equation}
i.e., the quadratic function $\psi_{\varepsilon}(x \mid x^{(k)})$ majorizes for the function $\varphi_{\varepsilon}(x)$ at $\bx^{(k)}$. Then we define a surrogate function to (\ref{prob:mmprob1}) by
\begin{equation}
    g_{\epsilon}(\bx \mid \bx^{(k)}) = \|\bA \bx - \bb\|_2^2 + \lambda^2 \sum_{j=1}^{N} \psi_{\varepsilon}(x_j \mid x_j^{(k)}).
\end{equation}
It can be readily verified that
\begin{equation}
f_{\varepsilon}(\bx^{(k)}) = g_{\varepsilon}(\bx^{(k)} \mid \bx^{(k)}) 
~~~\text{and} ~~~
f_{\varepsilon}(\bx) \leq g_{\varepsilon}(\bx \mid \bx^{(k)}) \quad \forall \, \bx \in \mathbb{R}^N.
\label{surrogate_properities}
\end{equation}
That is, the surrogate $g_{\varepsilon}(\bx \mid \bx^{(k)})$ coincides with $f_{\varepsilon}(\bx)$ at the current iterate $\bx^{(k)}$ and upper bounds it for any $\bx$. Thus, by taking the next iterate $\bx^{(k+1)}$ such that the surrogate $g_{\varepsilon}$ decreases, we ensure that the objective $f_{\varepsilon}$ also decreases:
\begin{align}
    f_{\epsilon}(\bx^{(k+1)}) \leq g_{\varepsilon}(\bx^{(k+1)} \mid \bx^{(k)}) \leq g_{\varepsilon}(\bx^{(k)} \mid \bx^{(k)}) = f_{\epsilon}(\bx^{(k)}).
\end{align}
These inequalities follow directly from  \eqref{surrogate_properities}.
Note that it is unnecessary to fully minimize the surrogate at each iteration. The MM algorithm for solving \eqref{eq:l1_regular} reads: given an initial guess $\bx^{(0)}$, we solve a sequence of reweighted least-squares problems
\begin{align}
    \bx^{(k+1)} = \arg \min_{\bx \in \mathbb{R}^N} g_{\varepsilon}(\bx,  \mid \bx^{(k)}) 
    = \arg \min_{\bx \in \mathbb{R}^N} \|\bA \bx   - \bb\|_2^2 + \lambda^2 \|\bL(\bx^{(k)})\bx \|_2^2,
    \label{prob:mmprob2}
\end{align}
with the diagonal matrix $\bL(\bx) = \operatorname{diag}([2\sqrt{x_i^2 + \varepsilon}]^{-1/2})_{i=1}^N$.

The convergence of the MM approach has been rigorously established (see, e.g., \cite{huang2017some}). However, minimizing the surrogate function $g_\varepsilon(\bx)$ for FMT requires solving problem \eqref{prob:mmprob2} with $N$ unknowns at each iteration. For small-scale problems, the exact solution can be obtained directly by solving normal equations; but for large-scale problems (e.g. FMT reconstruction), an iterative method is typically used, leading to computationally intensive inner-outer iterations \cite{rodriguez2008efficient}. 

To accelerate the convergence, inspired by recent advances (e.g., \cite{chung2019flexible, jiang2021hybrid}), we propose an approximation to \eqref{prob:mmprob2}. Specifically, given the current search subspace $\mathcal{V}_k  \subset \mathbb{R}^N$, we define a transformed subspace as $\mathcal{L}_k(\mathcal{V}_k) = \operatorname{span}\{ \bL_1^{-1} \bv_1, \dots, \bL_k^{-1} \bv_k \}$, where $\{\bv_j\}_
{j=1}^k$ are basis vectors of $\mathcal{V}_k$, and $\{\bL_j\}_{j=1}^k$ are preconditioning matrices defined by $\bL_j = \bL(\bx^{(j)})$ for $j \geq 2$ and $\bL_1 = \bI$. $\mathcal{L}_k$ can thus be interpreted as a variable preconditioner applied to $\mathcal{V}_k$. We then seek an approximate solution to  \eqref{prob:mmprob2} within the transformed subspace by solving 
\begin{equation}
    \label{prob:mmprob2_eq2}
    \bx^{(k+1)}  
    = \arg \min_{\bx \in \mathcal{L}_k(\mathcal{V}_k)} \|\bA \bx   - \bb\|_2^2 + \lambda^2 \| \bL_k\bx \|_2^2.
\end{equation}
Note that the search space $\mathcal{V}_k$ expands progressively and will span the full space $\mathbb{R}^N$ after $N$ iterations, though reaching the full space is often unnecessary in practice. 

\begin{figure}[h!] 
\centering
   \includegraphics[width = \textwidth]{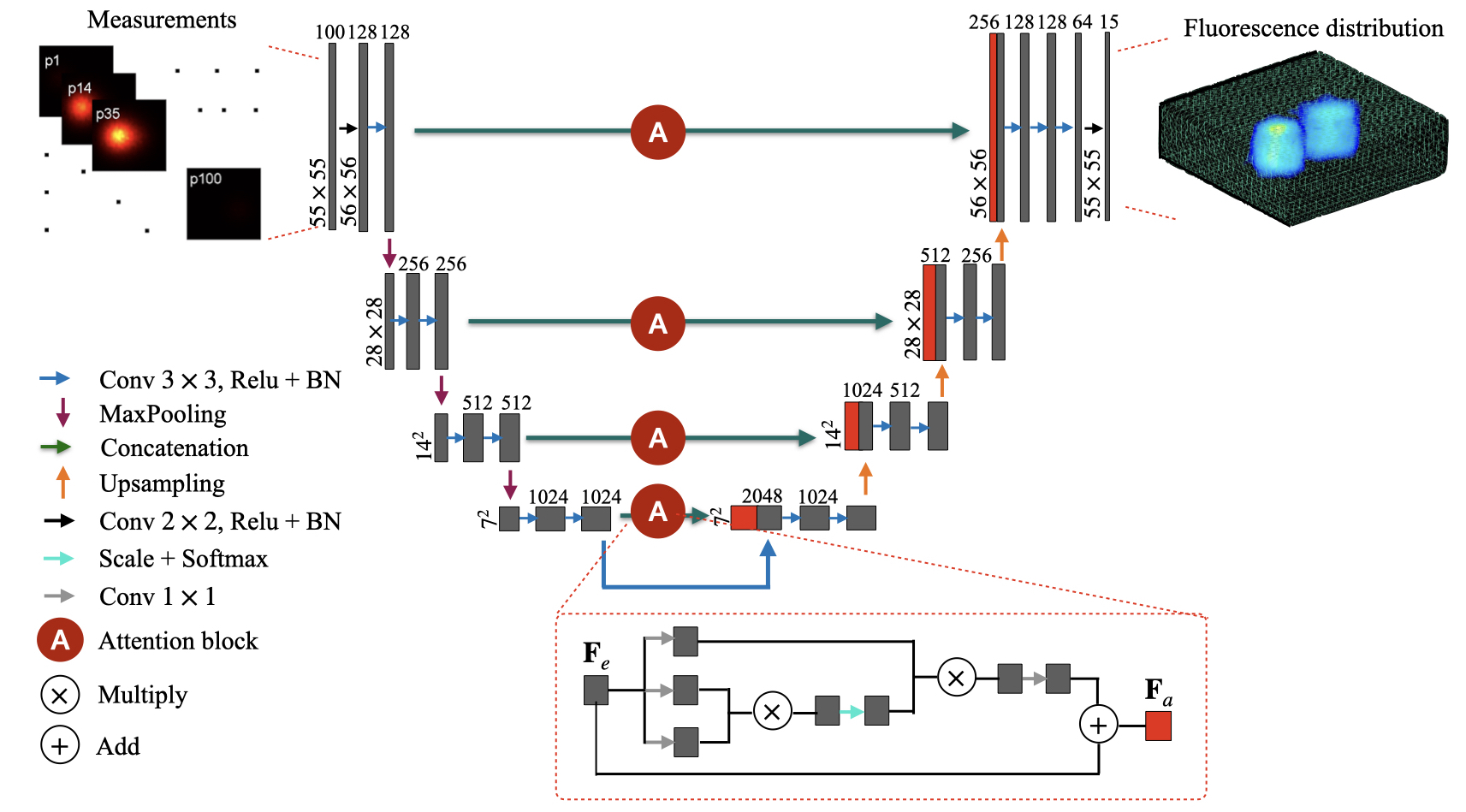}
   \caption{Overview of the Attention U-Net for generating warm basis. The network adopts an encoder-decoder structure with attention-enhanced skip connections. The encoder extracts multiscale features through convolutional layers with ReLU, batch normalization, and max-pooling, while the decoder reconstructs the fluorescence distribution by integrating encoder features via self-attention blocks.}
    \label{fig:nn}
\end{figure}

\subsection{Attention U-Net for producing warm basis}
\label{sec:aunet}
CNN-based architectures (e.g., U-Net) are widely used in imaging \cite{ronneberger2015u,litjens2017survey,cciccek20163d}. Standard encoder–decoder designs with skip connections may propagate low-level noise from early layers, which is critical in FMT due to limited and noisy boundary data \cite{li2022attention,nodirov2022attention}. In addition, FMT requires modeling long-range dependencies between surface measurements and internal fluorescence. To address this, self-attention blocks are embedded prior to feature concatenation, capturing global dependencies, and suppressing irrelevant patterns. This yields more robust reconstructions by retaining diagnostically meaningful features. Self-attention has shown success in natural image analysis \cite{vaswani2017attention,dosovitskiy2020image}, medical imaging \cite{chen2021transunet,nodirov2022attention}, and multimodal tasks \cite{radford2021learning}. Motivated by this, we adopt the Attention U-Net architecture in \cref{fig:nn} to design our neural network framework for prediction.  

Given encoder features $\mathbf{F}_e$, the Attention block reads as:
\begin{equation}
    \mathbf{F}_a = \mathbf{F}_e + \mathbf{W}^o * \left( \operatorname{softmax}\!\Biggl(\frac{(\mathbf{W}^q * \mathbf{F}_e)(\mathbf{W}^k * \mathbf{F}_e)^\top}{\sqrt{d_k}}\Biggr)(\mathbf{W}^v * \mathbf{F}_e) \right),
\end{equation}
where $\mathbf{W}^q$, $\mathbf{W}^k$ and $\mathbf{W}^v$ are $1 \times 1$ convolutions that project $\mathbf{F}_e$ into query, key, and value tensors, respectively.  These three learnable weight projections introduce global, data-adaptive interactions across all spatial locations, allowing the network to emphasize relevant features, especially when certain signals are ambiguous or corrupted (e.g. depth information affected by limited light penetration and scattering).  $d_k$ is the dimension of query/key projections, $\operatorname{softmax}(\cdot)$ operates along the key dimension and $\mathbf{W}^o$ is the output projection convolution. The architecture of the proposed Attention U-Net is depicted  in the zoom-in region of~\cref{fig:nn}.

\subsection{Network prediction and iteration}
Now we show that a-priori information and iterative schemes must mutually benefit each other to achieve high accuracy.

First, simply using the neural network prediction as the initial guess does not always guarantee accuracy improvement; see \cref{fig:ws_wb_compare} for illustration. 
The main reason is that high-frequency image details, associated with small singular values of $\bA$, are highly sensitive to noise and regularization, and errors in these modes may be amplified in iteration and cannot be adequately controlled by regularization alone.
This also results in poor FMT performance along the $z$-axis; 
see Table \ref{tab:error_results}.
One subtle reason for this issue concerns the discrepancy between the data-guided loss in training and the iteration objective, which strongly relies on in-distribution data. 
In \cref{fig:ws_wb_compare}, the experimental data deviate from that of the training data, posing a challenge for network generalization, and the warm-start methods exhibit pronounced errors.

\begin{figure}[h]
    \centering
    \includegraphics[width=1\linewidth]{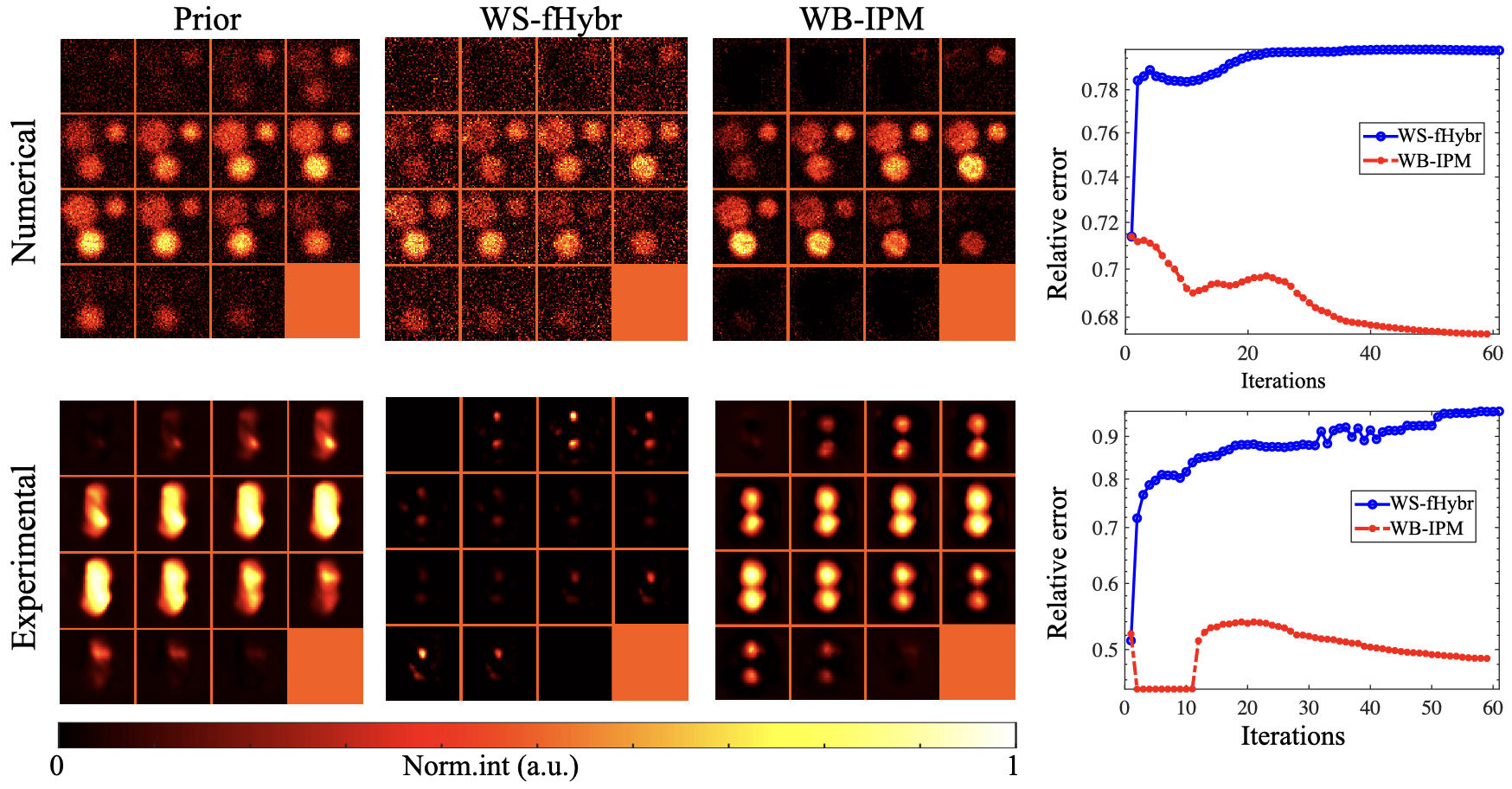}
    \caption{Comparison of a warm start method with fHybr iteration and WB-IPM still with fHybr iteration for numerical (top) and experimental (bottom) cases. 
    Warm start risks degrading prior prediction, while WB-IPM ensures robustness and accuracy. 
    }
    \label{fig:ws_wb_compare}
\end{figure}

Meanwhile, the a-priori information obtained by the neural network with data may just be compensated by iterative schemes. To show this, consider an experiment in which the proposed WB-IPM is applied to an initial basis produced by the fHybr method rather than the network.
It shows that the information hidden in the warm basis by the network cannot be recovered by repeatedly applying the iterative schemes, indicating that the network provides additional information beyond the subspaces generated by fHybr.
\begin{figure}[h]
    \centering
    \includegraphics[width=0.6\linewidth]{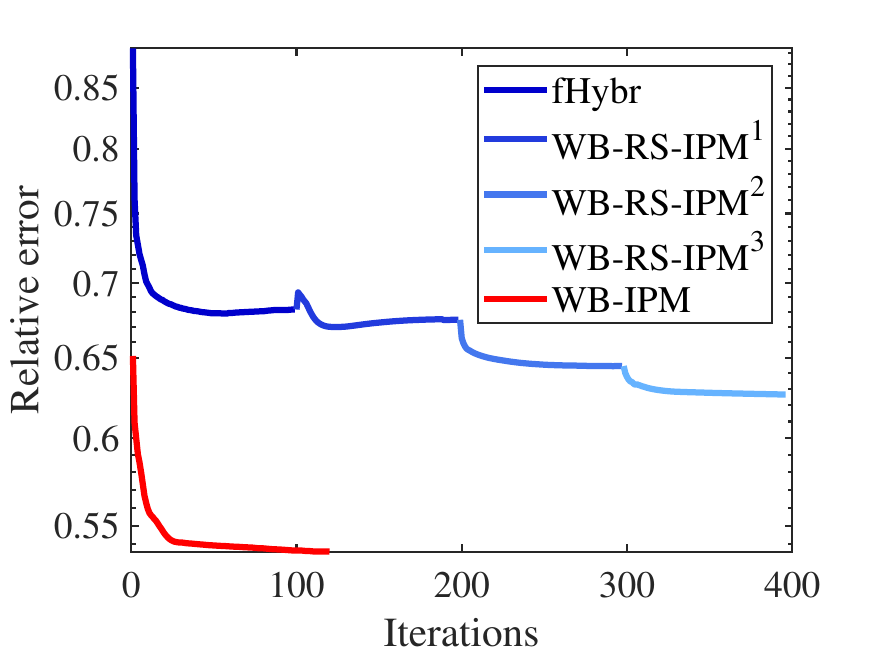}
    \caption{Relative error under different initialization strategies.
WB-RS-IPM denotes Warm-Basis Restarted IPM: $\mathrm{WB\text{-}RS\text{-}IPM}^{k}$
runs IPM in $k$ stages, where stage $1$ is initialized by fHybr, and later stages warm-started from previous reconstructions.  Restarts
yield modest accuracy gains, whereas NN-initialized WB-IPM achieves 
the best accuracy.
 }
    \label{fig:stage_changing}
\end{figure}

%%%%%%%%%%%%%%%%%%%%%%%%%%%%%%%%%%%%%%%%%%%%%%%%%%%%%%%%
\section{A Warm-basis iterative projection method} \label{sec:wb-ipm}
In this section, we present the warm-basis iterative projection method (WB-IPM) that can exploit the data-driven prior information.

\subsection{Space decomposition}
\label{sec:alternating_solver} 
The key is to employ a space decomposition with an alternating solver.
Suppose that $\bA, \bb$ and $\bx_{\text{nn}}$ are given. Let $\widehat{\bx}_{\text{nn}} = \bx_{\text{nn}}/\norm{\bx_{\text{nn}}}$ be a normalized NN-initialized basis. To include $\widehat{\bx}_{\text{nn}}$ as a basis in the solution space, we decompose the solution $\boldsymbol{x}$ as
\begin{equation}
    \bx = c \widehat{\bx}_{\text{nn}} + \bz, ~~~~~ \bz \perp \widehat\bx_{\text{nn}},
\end{equation}
and project problem \eqref{prob:mmprob2} onto the subspace spanned by $\widehat{\bx}_{\text{nn}}$ and the iteratively generated space $\mathcal{Z}_k \perp  \widehat{\bx}_{\text{nn}}$. 
Without loss of generality, we assume $\bA \widehat{\bx}_{\text{nn}} \notin \mathcal{R}(\bb)$, where $\mathcal{R}(\cdot)$ denotes the range;
otherwise $\mathcal{R}(\widehat{\bx}_{\text{nn}})$ already contains the solution, and no further augmentation is needed.
Let $\by = \bA\widehat{\bx}_{\text{nn}}/\gamma$ with $\gamma = \norm{\bA\widehat{\bx}_{\text{nn}}}_2$. Then,
\begin{align}
%\begin{split}
\label{eq_Axz}
\norm{\bA\bx - \bb}_2^2
&=\norm{\bA(c \widehat{\bx}_{\text{nn}}  + \bz) - \bb}_2^2 
= \norm{ (c \gamma \by  + \bA\bz)  - \bb}_2^2 \notag \\
&= \norm{ (\bI - \by\by^\top) \bA\bz  - \bb + \by\by^\top (c \gamma \by  +  \bA\bz ) }_2^2 \notag\\
&= \norm{ (\bI - \by\by^\top) (\bA{\bz} - \bb) }_2^2 + \norm{ (c \gamma  +  \by^\top\bA\bz)-\by^\top \bb }_2^2 \notag \\
&= \norm{ \widetilde{\bA}{\bz} - \widetilde{\bb} }_2^2 + \norm{ \gamma c+\by^\top \bA{\bz}  - \by^\top \bb }_2^2, \notag
\end{align}
where $\widetilde{\bA} = (\bI - \by\by^\top )\bA$ and $\widetilde{\bb} = (\bI- \by\by^{\top})\bb$. Motivated by the analysis, we propose a warm-basis alternating solver to approximate the solution of \eqref{prob:mmprob2} in \cref{alg:WB-frame}. Note that \eqref{eq_ck} is a one-dimensional problem, but \eqref{eq_zk} involves a varying weighting matrix and is more expensive to solve. To maintain computational efficiency, we employ the AFGK iterative method in Subsection \ref{sec:AFGK}, which includes the construction of subspaces $\mathcal{Z}_k$, the explicit solution of \eqref{eq_ck}, and a strategy to select $\lambda_k$ and $\alpha_k$.

\begin{algorithm}[hbt!]
\caption{Warm-basis alternating solver}
\begin{algorithmic}[ht!]
\label{alg:WB-frame}
\REQUIRE $\bA$, $\bb$, the data-driven $\widehat{\bx}_{\text{nn}}$,
 the random initial guesses $\bz_0$ and $c_0$.
\STATE Obtain $\widetilde{\bA}$, $\widetilde{\bb}$ 
\WHILE {the stopped criteria not satisfied}
\STATE Generate data-driven subspace $\mathcal{V}_k$ orthogonal to $\widehat{\bx}_{\text{nn}}$ by the method in Subsection \ref{sec:AFGK}. 
\STATE Generate a linear mapping $\mathcal{L}_k$ by the matrices $\{\bL_j = \bL(\bz^{(j)})\}_{j=1}^k$.
\STATE Generate the searching space $\mathcal{Z}_k = \mathcal{L}_k(\mathcal{V}_k) \cap \{\widehat{\bx}_{\text{nn}}\}^{\perp}$ and compute
\begin{align}
 & \bz_{k+1} 
    = \arg \min_{\bz \in \mathcal{Z}_k } \| \widetilde{\bA} \bz   - \widetilde{\bb}\|_2^2 + \lambda_k^2 \|\bL_k\bz \|_2^2, \label{eq_zk} \\
 & c_{k+1} = \arg \min_{c \in \mathbb{R}}  \norm{ \gamma c+\by^\top \bA{\bz_{k+1}}  - \by^\top \bb }_2^2 + \alpha_k^2 c^2. \label{eq_ck}
\end{align}
\ENDWHILE
\STATE Compute the final solution $\bx = c_{k+1} \widehat{\bx}_{\text{nn}} + \bz_{k+1}$.
\end{algorithmic}
\end{algorithm}

\subsection{AFGK iterative method\label{sec:AFGK}}
We exploit aspects of both flexible \cite{chung2019flexible} and recycling Golub-Kahan projection methods \cite{jiang2021hybrid} to develop an augmented flexible Golub-Kahan (AFGK) projection method with two main components.  First, we generate a single basis in $\mathcal{Z}_k$, using a flexible preconditioning framework integrated with an orthogonality constraint with respect to $\bx_{\text{nn}}$. Second, we compute an approximate solution by solving a regularized optimization problem in the projected subspace, where the regularization parameter $\lambda_k$ is estimated automatically. 

\emph{AFGK process.}
Given $\widetilde{\bA}, \widetilde{\bb}$, a sequence of varying preconditioners $\{\bL_j\}_{j=1}^{k}$, and warm basis $\widehat{\bx}_{\text{nn}}$, we initialize the iterations with a vector $\bu_1 = \widetilde{\bb} / \beta$, where $\beta = \|\widetilde{\bb}\|_2$. The $k$th iteration of AFGK method generates vectors $\bz_{k}$, $\bv_{k}$, and $\bu_{k+1}$ such that 
\begin{align}
\widetilde{\bA} \bZ_k &= \bU_{k+1} \bG_k, \label{eq:U_recycling} \\
\widetilde{\bA}^\top \bU_{k+1} &= \bV_{k+1} \bT_{k+1} \label{eq:V_recycling}, 
\end{align}
where $\bZ_k = (\mathbf{I} - \widehat{\bx}_{\text{nn}} \widehat{\bx}_{\text{nn}}^\top) \begin{bmatrix}\mathbf{L}_1^{-1}\bv_1 &\ldots&\mathbf{L}_k^{-1}\bv_k\end{bmatrix} \in \mathbb{R}^{N \times k}$, $\bU_{k+1} = \begin{bmatrix}\bu_1 & \ldots & \bu_{k+1}\end{bmatrix} \in \mathbb{R}^{M \times (k+1)}$ has orthonormal columns, $\bG_k \in \mathbb{R}^{(k+1) \times k}$ is upper Hessenberg and $\bT_{k+1} \in \mathbb{R}^{(k+1) \times (k+1)}$ is upper triangular.
We verify several orthogonality conditions involving $\widehat{\bx}_{\text{nn}}$ as follows. 
Note that $\widehat{\bx}_{\text{nn}} \perp \bZ_k$. Let $\mathcal{V}_k = \mathcal{R}(\bV_k)$. Then the search space in \eqref{eq_ck} is given by $\mathcal{Z}_k = \mathcal{L}_k(\mathcal{V}_k) = \mathcal{R}(\bZ_k)$. In exact arithmetic, the solution spaces of the two subproblems \eqref{eq_zk} and \eqref{eq_ck} are mutually orthogonal without the need for explicit orthogonalization, i.e.,
$\widehat{\bx}_{\text{nn}} \perp \mathcal{Z}_k$.

\vspace{0.1in}

\emph{Solving the least squares problem.}
Next, we seek an approximate solution to the least squares problem \eqref{eq_zk} in $\mathcal{Z}_k$. To determine the coefficients $\bd_k$, we plug the AFGK relations \eqref{eq:U_recycling} and \eqref{eq:V_recycling} into \eqref{eq_zk} and obtain 
\begin{align}
    \bd_k 
    &= \arg \min_{\bd \in \mathbb{R}^k} \| \widetilde{\bA} \bZ_k\bd   - \widetilde{\bb}\|_2^2 + \lambda_k^2 \|\bZ_k\bd \|_2^2 \\
    & = \arg \min_{\bd \in \mathbb{R}^{k}} \norm{ \bG_k\bd - \beta \mathbf{e}_1}_2^2 + \lambda_k^2 \norm{ \bR_{Z,k}\bd }_2^2,  \label{prob_proj_d}
\end{align}
where a thin QR factorization is performed on $\bZ_k  = \bQ_{Z,k}\bR_{Z,k}$ with $\bQ_{Z,k} \in \mathbb{R}^{N\times k}$ and $\bR_{Z,k} \in \mathbb{R}^{k\times k}$. The details of the QR factorization are given in Appdenix \ref{sec:eff-QR}. To determine $c_k$, we substitute $\bd_k$, \eqref{eq:U_recycling} and \eqref{eq:V_recycling} into \eqref{eq_ck}, and get
\begin{align}
    c_{k} &= \arg \min_{c \in \mathbb{R}}  \norm{ \gamma c+\by^\top \bA\bZ_k\bd_k  - \by^\top \bb }_2^2 + \alpha_k^2 c^2 \label{eq:ck_lam_1} 
     = \frac{\gamma (\by^\top\bb-\by^\top \bA \bZ_k \bd_k ) }{\gamma^2+\alpha_k^2}. %\label{eq:ck_lam}
\end{align}
The regularization parameters $\lambda_k$ and $\alpha_k$  can be  efficiently and automatically estimated by applying  standard parameter selection techniques, e.g., the weighted generalized cross-validation (WGCV) method \cite{chung2008weighted}, to the projected problems  \eqref{prob_proj_d} and \eqref{eq:ck_lam_1}, respectively. The AFGK method is summarized in \cref{alg:Augmented_FGK}. By Theorem 3.2 in \cite{chung2019flexible}, the solution subspace generated by AFGK coincides with 
\begin{align*}
    \mathcal{R}(\begin{bmatrix}\widehat{\bx}_{\text{nn}}, \bZ_k\end{bmatrix}) &= \text{Span} \left\{\widehat{\bx}_{\text{nn}},  \prod_{i=2}^k \bK_i \bL_1^{-1} \bA^\top (\bI - \by\by^\top )\bb\ \right\}
\end{align*}
with $\bK_i = \bL_i^{-1} \bA^\top (\bI - \by\by^\top )\bA$, as $ (\bI - \by\by^\top )^\top = \bI - \by\by^\top$ and $(\bI - \by\by^\top )^2 = \bI - \by\by^\top$.

\begin{algorithm}[h!]
\caption{Augmented flexible Golub-Kahan (AFGK) Process \RG{}}
\begin{algorithmic}[ht!]
\label{alg:Augmented_FGK}
    \STATE Initialize $\bu_1 = \widetilde{\bb}/ \beta$, where $\beta = \norm{\widetilde{\bb}}_2$
    \FOR{$i=1, \ldots, k$}
        \STATE Compute $ \bh = \bu_i - \by(\by^\top \bu_i), \bh = \bA^\top \bh$, 
        \STATE $t_{ji} = \bh^\top \bv_j$ for $j = 1, \ldots, i - 1$
        \STATE Set $\bh = \bh - \sum_{j=1}^{i-1} t_{ji} \mathbf{v}_j$, compute $t_{ii} = \norm{\bh}_2$ and take $\mathbf{v}_i = \bh / t_{ii}$
        \STATE Compute $\mathbf{z}_i = \mathbf{L}_i^{-1} \mathbf{v}_i$, $\mathbf{z}_i = \mathbf{z}_i - \widehat{\bx}_{\text{nn}}(\widehat{\bx}_{\text{nn}}^\top \mathbf{z}_i)$
        \STATE Set $\bh = \bA \mathbf{z}_i$, $\bh = \bh - \by(\by^\top \bh)$
        \STATE $g_{ji} = \bh^\top \mathbf{u}_j$ for $j = 1, \ldots, i$ and set $\bh = \bh - \sum_{j=1}^{i} g_{ji} \mathbf{v}_j$
        \STATE Compute $g_{i+1,i} = \norm{\bh}_2$ and take $\mathbf{u}_{i+1} = \bh / g_{i+1,i}$
    \ENDFOR
\end{algorithmic}
\end{algorithm}

\section{Analysis of the WB-IPM\label{sec:assessment}}
In this section, we analyze the residuals and errors of the solutions produced by the WB-IPM. 
The analysis motivates the design and highlights the benefits of the WB-IPM.
Below we often use the space $\ker(\widetilde{\bA})$
which can be represented as
\begin{equation}
\label{kerA}
\mathcal{K}:=\ker(\widetilde{\bA}) = \text{Span}\{ \bx_{nn} \} + \ker(\bA).
\end{equation}
For any vector $\mathbf{v}$,
$\|\mathbf{v}\|_\infty = \max_{i} |v_i|,$
and for any matrix $\mathbf{M}$, $\sigma_{\max}(\mathbf{M})$ and $\sigma_{\min}(\mathbf{M})$ denote the maximum and minimum singular values of $\mathbf{M}$, respectively. Let $\widetilde{\bB} = \widetilde{\bA}^\top\widetilde{\bA}$, 
$\bD = \bL_z^\top\bL_z$
and $\bD_{\lambda} = \lambda^2 \bL_z^\top\bL_z$ where  the regularization parameter $\lambda$ and invertible preconditioner $\bL_z$ are fixed in the analysis below. 
Note that $ \widetilde{\bB} $ is positive semidefinite and $ \bD_{\lambda} $ is positive definite, both symmetric, 
and $\widetilde{\bB} +\bD_{\lambda} $ is symmetric and positive definite. 
The solution $\bz_w$ to \eqref{eq_zk} after $N$ iterations is given by
\begin{equation}
\label{z_solu1}
\bz_w = ( I - \hat{\bx}_{\text{nn}}\hat{\bx}^\top_{\text{nn}}  ) (\widetilde{\bB} + \bD_{\lambda})^{-1} \widetilde{\bA}^\top\widetilde{\bb},
\end{equation}
and the solution $c_w$ to \eqref{eq_ck} is given by
\begin{equation}
\label{z_solu2}
c_w =  \frac{\gamma \by^\top ( \bb - \bA \bz_w )}{\gamma^2+\alpha^2}.
\end{equation}
Then, the WB-IPM solution can be written as
\begin{equation}
\label{z_solu3}
\bx_w = \bz_w + c_w \hat{\bx}_{\text{nn}}.
\end{equation}
Meanwhile, consider the orthogonal decomposition of the true solution $\bx^*$: 
\begin{equation}
\label{thm:error_bounds_eq1}
\bx^* = \bz^* + c^*\widehat{\bx}_{\text{nn}}, ~~~ \text{with} ~ c^*= (\bx^*)^\top\widehat{\bx}_{\text{nn}},~~ \bz^* \bot \; \widehat{\bx}_{\text{nn}}
\end{equation}
Then $c^*\widehat{\bx}_{\text{nn}} \in \mathcal{K}$. Moreover, define the quantity: 
\begin{equation}
\label{Gamma}
\Gamma(\bv;\mathcal{S}) = \| \bv_\bot \|^2_{\bD}/\| \bv \|^2_{\bD},
\end{equation}
with $\bv_\bot$ being the $\bD$-projection of $\bv$ onto the space $\mathcal{S}$, to measure the error.
Note that $\Gamma(\bv;\mathcal{S})  \le 1$,
and it is $1$ only if $\bv\in\mathcal{S}$.

%%%%%%%%%%%%%%%%%%%%%%%%%%
\begin{lemma}
\label{lem_DB}
Given any vector $\bv$, there holds
\begin{equation}
\label{lem_DB_eq1}
 \| (\widetilde{\bB} + \bD_{\lambda})^{-1} \bD_{\lambda} \bv \|_{2} 
 \le  \frac{1}{\sigma_{\min}(\bL_z)} 
 \left( \Gamma(\bv;\mathcal{K}) +  \frac{\lambda^4(1- \Gamma(\bv;\mathcal{K}))}{(\lambda^2+\theta_+)^2} \right)^{\frac{1}{2}} \| \bv \|_{\bD},
\end{equation}
where $\theta_+$ is the smallest non-zero eigenvalue of $\bD^{-1}\widetilde{\bB}$. 
\end{lemma}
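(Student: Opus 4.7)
The natural approach is simultaneous diagonalization of the pencil $(\widetilde{\bB},\bD)$. Since $\bD$ is symmetric positive definite and $\widetilde{\bB}$ is symmetric positive semidefinite, there exists a basis $\{\bvarphi_i\}_{i=1}^N$ that is $\bD$-orthonormal (i.e., $\bvarphi_i^\top \bD \bvarphi_j = \delta_{ij}$) and satisfies $\widetilde{\bB}\bvarphi_i = \theta_i\,\bD\bvarphi_i$ with $\theta_i\ge 0$. The first step is to identify the zero eigenspace with $\mathcal{K}$: since $\bD$ is invertible, $\theta_i=0 \iff \widetilde{\bB}\bvarphi_i=0\iff \widetilde{\bA}\bvarphi_i = 0$, so $\operatorname{Span}\{\bvarphi_i:\theta_i=0\} = \mathcal{K}$, and all remaining eigenvalues satisfy $\theta_i\ge \theta_+$. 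Writing $I_0=\{i:\theta_i=0\}$ and $I_+=\{i:\theta_i>0\}$, the $\bD$-orthonormality also implies that $\operatorname{Span}\{\bvarphi_i:i\in I_+\}$ is the $\bD$-orthogonal complement of $\mathcal{K}$.

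Next I would expand $\bv=\sum_i a_i\bvarphi_i$, so that $\|\bv\|_{\bD}^2=\sum_i a_i^2$ and, by the definition of $\Gamma$,
\begin{equation*}
\sum_{i\in I_0} a_i^2 = \Gamma(\bv;\mathcal{K})\,\|\bv\|_{\bD}^2,
\qquad
\sum_{i\in I_+} a_i^2 = (1-\Gamma(\bv;\mathcal{K}))\,\|\bv\|_{\bD}^2.
\end{equation*}
Since $(\widetilde{\bB}+\bD_\lambda)\bvarphi_i=(\theta_i+\lambda^2)\bD\bvarphi_i$, applying the resolvent to $\bD_\lambda\bv=\lambda^2\bD\sum_i a_i\bvarphi_i$ gives the explicit spectral expression
\begin{equation*}
\bw := (\widetilde{\bB}+\bD_\lambda)^{-1}\bD_\lambda\bv = \sum_i a_i\,\frac{\lambda^2}{\theta_i+\lambda^2}\,\bvarphi_i,
\end{equation*}
whence $\|\bw\|_{\bD}^2=\sum_i a_i^2\,\lambda^4/(\theta_i+\lambda^2)^2$.

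The heart of the estimate is then a split of this sum according to $I_0$ and $I_+$: for $i\in I_0$ the factor equals $1$, contributing exactly $\Gamma(\bv;\mathcal{K})\|\bv\|_{\bD}^2$; for $i\in I_+$ the monotone decrease of $\lambda^4/(\theta+\lambda^2)^2$ in $\theta$ yields the uniform bound $\lambda^4/(\lambda^2+\theta_+)^2$, contributing at most $\lambda^4(1-\Gamma(\bv;\mathcal{K}))/(\lambda^2+\theta_+)^2\,\|\bv\|_{\bD}^2$. Adding the two contributions produces exactly the quantity under the square root in \eqref{lem_DB_eq1}, expressed as a bound on $\|\bw\|_{\bD}^2$.

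Finally, to convert from the $\bD$-norm back to the Euclidean norm, I would use $\bD=\bL_z^\top\bL_z$, so $\|\bw\|_{\bD}^2 = \|\bL_z\bw\|_2^2\ge \sigma_{\min}(\bL_z)^2\,\|\bw\|_2^2$, giving $\|\bw\|_2\le \sigma_{\min}(\bL_z)^{-1}\|\bw\|_{\bD}$ and hence \eqref{lem_DB_eq1}. The only subtle point, and the one I would verify most carefully, is the identification of the $\bD$-orthogonal projection onto $\mathcal{K}$ with the partial sum over $I_0$ in the generalized-eigenvector expansion; all other steps are routine spectral manipulations once the simultaneous diagonalization is in place.
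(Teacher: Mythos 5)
Your proposal is correct and follows essentially the same route as the paper's proof: both diagonalize the problem via the generalized eigenpairs of $(\widetilde{\bB},\bD)$ (equivalently, the eigenvectors of $(\widetilde{\bB}+\bD_{\lambda})^{-1}\bD_{\lambda}$ with eigenvalues $\lambda^2/(\lambda^2+\theta_i)$), identify the zero-eigenvalue span with $\mathcal{K}$ so that the $I_0$-partial sum gives $\Gamma(\bv;\mathcal{K})$, bound the $I_+$ contribution by $\lambda^4/(\lambda^2+\theta_+)^2$, and convert the $\bD$-norm to the $2$-norm through $\sigma_{\min}(\bL_z)$. The "subtle point" you flag is handled correctly, since the $\bD$-orthonormality of the generalized eigenvectors makes the $I_0$-partial sum exactly the $\bD$-orthogonal projection onto $\mathcal{K}$.
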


\begin{proof}
We first estimate the eigenvalues of 
$(\widetilde{\bB} + \bD_{\lambda})^{-1} \bD_{\lambda}$, denoted by $\mu_1 \ge \dots \ge \mu_N$. Let $\bg_k$ be the eigenvector corresponding to $\mu_k$. 
Let $\theta_1 \ge \dots \ge\theta_N \ge 0$ be the eigenvalues of the matrix $\bD^{-1}\widetilde{\bB}$.
Then, there holds
\begin{align}
(\widetilde{\bB} + \bD_{\lambda})^{-1}\bD_{\lambda}\bg_k = \mu_k \bg_k \label{eq:error_vc_wb-ipm_4}
~~~
\Longleftrightarrow
~~~
\mu_k (\bD_{\lambda}^{-1}\widetilde{\bB} + \bI) \bg_k= \bg_k,
\end{align}
which implies
\begin{equation}
\label{eq:error_vc_wb-ipm_41}
\mu_k  = \frac{\lambda^2}{\lambda^2 + \theta_{N-k+1}}.
\end{equation}
Since $\widetilde{\bB}$ is singular, we have $\theta_{N - N_0} > \theta_{N - N_0+1} = \dots =\theta_N = 0$ for some integer $N_0\ge1$. Then $\mu_1=\mu_2 = \cdots = \mu_{N_0} = 1 >  \mu_{N_0+1} \ge \cdots \mu_N > 0$ and $\{\bg_k\}_{k=1}^{N_0} \subset \ker(\widetilde{\bB})$. Since $\{\bg_k\}_{k=1}^N$ are orthogonal with respect to the $\bD_{\lambda}$-inner product and also the $\bD$-inner product due to the simple scaling.
Then, we express $\bv$ as
\begin{align}
    \bv &= \sum_{k=1}^N \gamma_k \bg_k,
\end{align}
which implies 
\begin{align}
  \|\bv \|^2_{\bD} = \sum_{k=1}^N \gamma_k^2 \|\bg_k\|^2_{\bD}.
\end{align}
Then, the $\bD_{\lambda}$-orthogonality implies
\begin{equation}
\begin{split}
\label{eq:error_vc_wb-ipm_3}
 &  \norm{(\widetilde{\bB} + \bD_{\lambda})^{-1} \bD_{\lambda} \bv }^2_2
 =    \norm{ (\widetilde{\bB} + \bD_{\lambda})^{-1} \bD_{\lambda} \sum_{k=1}^N \gamma_k \bg_k}^2_2 \\
 = & \norm{ \sum_{k=1}^N \gamma_k \mu_k \bD^{-1}\bg_k }^2_{\bD}
 \le  \sigma_{\text{max}}(\bD^{-1})\sum_{k=1}^N \gamma^2_k \mu^2_k \| \bg_k \|^2_{\bD} \\
 %\norm{ \sum_{n=1}^N \gamma_n \mu_n \bw_n }^2_{\bD^{-1}_{\lambda}} \\
 \le & \frac{1}{\sigma_{\text{min}}(\bD)} (\sum_{k=1}^{N_0} \gamma^2_k \| \bg_k \|^2_{\bD} +  \mu^2_{N_0+1} \sum_{k=N_0}^{N} \gamma^2_k \| \bg_k \|^2_{\bD}) \\
 = &  \frac{1}{\sigma_{\text{min}}(\bD)}\left(  \Gamma + \mu^2_{N_0+1}(1-\Gamma) \right) \| \bv \|^2_{\bD},
 %=& \frac{1}{\sigma_{\text{min}}(\bD)}\left(  (1-\Gamma) + \mu^2_{N_0+1}\Gamma \right) \| \bv \|^2_{\bD},
 \end{split}
\end{equation}
where the quantity $\Gamma$ is given as
\begin{equation}
\label{eq:error_vc_wb-ipm_5}
\begin{split}
\Gamma & = \left( \sum_{k=1}^{N_0}\gamma^2_k \| \bg_k \|^2_{\bD}  \right)/\left( \sum_{k=1}^{N}\gamma^2_k \| \bg_k \|^2_{\bD}  \right) 
 = \| \bv_\bot \|^2_{\bD}/\| \bv \|^2_{\bD}, 
%=  \| \bv_\top \|^2_{\bD_{\lambda}}/\| \bv \|^2_{\bD_{\lambda}} ,
\end{split}
\end{equation}
with ${\bv}_\bot$ being the $\bD$-projection of ${\bv}$ onto $\ker(\widetilde{\bB})$. 
Since $\widetilde{\bB} = \widetilde{\bA}^\top\widetilde{\bA}$ implies $\ker(\widetilde{\bB}) = \ker(\widetilde{\bA})$, 
${\bv}_\bot$ is equivalently the $\bD$-projection of ${\bv}$ onto $\ker(\widetilde{\bA})$,
i.e., $\Gamma = \Gamma(\bv;\mathcal{K})$. 
Last, by noting $\mu_{N_0+1} = \frac{\lambda^4}{(\lambda^2+\theta_{N-N_0})^2}$ from \eqref{eq:error_vc_wb-ipm_41}
with $\theta_{+} = \theta_{N-N_0} $ being the smallest non-zero eigenvalue of $\bD^{-1}\widetilde{\bB}$,
we obtain the desired result from \eqref{eq:error_vc_wb-ipm_3}.
\end{proof}

\begin{theorem}
\label{thm:error_bounds}
With the regularization parameters $\alpha>0$ and $\lambda>0$,
the following error bound holds:
\begin{align}
    \norm{\bx^* - \bx_w}_2 & \le \frac{\alpha^2 |c^*| }{\gamma^2 + \alpha^2} 
    + \mathcal{C}_1 \mathcal{C}_2 \Gamma(\bx^*, \emph{Span}(\hat{\bx}_{nn})^{\bot}) \| \bx^* \|_{\bD} 
    + \mathcal{C}_3\norm{\bbeta}_2,  \label{eq:bound_warm}
\end{align}
with 
\begin{subequations}
\begin{align}
    & \mathcal{C}_1 = \left( 1+\frac{\gamma \| \bA^\top \by \|_2}{\gamma^2 + \alpha^2} \right)/\sigma_{\min}(\bL_{\bz}) , \\
    &  \mathcal{C}_2 = \left( \Gamma(\bz^*;\mathcal{K})  +  \frac{\lambda^4}{(\lambda^2+\theta_{+})^2} (1- \Gamma(\bz^*;\mathcal{K})) \right)^{1/2}, \\
    &  \mathcal{C}_3 = \mathcal{C}_1 \frac{\sigma_{\max}(\bA)}{\lambda^2\sigma_{\min}(\bL_{\bz})} + \frac{\gamma}{\gamma^2 + \alpha^2}.
\end{align}
\end{subequations}
\end{theorem}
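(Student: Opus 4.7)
The plan is to estimate the two components of
$\bx^* - \bx_w = (\bz^* - \bz_w) + (c^* - c_w)\widehat{\bx}_{\text{nn}}$
separately. Since both $\bz^*$ (by \eqref{thm:error_bounds_eq1}) and $\bz_w$ (thanks to the projector in \eqref{z_solu1}) are $\ell_2$-orthogonal to $\widehat{\bx}_{\text{nn}}$, Pythagoras gives
$\|\bx^* - \bx_w\|_2 \le \|\bz^* - \bz_w\|_2 + |c^* - c_w|$.
The first summand will supply the $\mathcal{C}_1 \mathcal{C}_2$ term of \eqref{eq:bound_warm} and part of $\mathcal{C}_3\|\bbeta\|_2$, while the second yields the $\alpha^2|c^*|/(\gamma^2+\alpha^2)$ term together with a coupling into $\|\bz^*-\bz_w\|_2$ that will be absorbed into $\mathcal{C}_1$.

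First I handle the coefficient error by direct substitution of $\bb = \bA\bx^* + \bbeta$ into \eqref{z_solu2}; together with $\bA\widehat{\bx}_{\text{nn}} = \gamma\by$ this gives
\begin{equation*}
c^* - c_w = \frac{\alpha^2 c^* - \gamma\,\by^\top\bA(\bz^* - \bz_w) - \gamma\,\by^\top\bbeta}{\gamma^2+\alpha^2}.
\end{equation*}
Triangle inequality and $\|\by\|_2 = 1$ then bound $|c^* - c_w|$ by three pieces: the leading $\alpha^2|c^*|/(\gamma^2+\alpha^2)$, a cross term of size $\gamma\|\bA^\top\by\|_2/(\gamma^2+\alpha^2)\cdot\|\bz^*-\bz_w\|_2$, and $\gamma\|\bbeta\|_2/(\gamma^2+\alpha^2)$. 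Combined with the unit coefficient of $\|\bz^*-\bz_w\|_2$ from Pythagoras, the cross term produces precisely the prefactor $\sigma_{\min}(\bL_{\bz})\mathcal{C}_1$ in front of the $\bz$-error, and the last piece becomes the second summand of $\mathcal{C}_3\|\bbeta\|_2$.

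The technical crux is the bound on $\|\bz^* - \bz_w\|_2$. I would first verify the algebraic identities $\widetilde{\bA}^\top\by = 0$ (hence $\widetilde{\bA}^\top\bA = \widetilde{\bB}$) and $\widetilde{\bA}^\top\bA\widehat{\bx}_{\text{nn}} = 0$; these reduce $\widetilde{\bA}^\top\widetilde{\bb}$ to $\widetilde{\bB}\bz^* + \widetilde{\bA}^\top\bbeta$. Using the resolvent identity $(\widetilde{\bB}+\bD_\lambda)^{-1}\widetilde{\bB} = \bI - (\widetilde{\bB}+\bD_\lambda)^{-1}\bD_\lambda$ and $\bz^* = (\bI - \widehat{\bx}_{\text{nn}}\widehat{\bx}_{\text{nn}}^\top)\bz^*$ then yields
\begin{equation*}
\bz^* - \bz_w = (\bI - \widehat{\bx}_{\text{nn}}\widehat{\bx}_{\text{nn}}^\top)\bigl[(\widetilde{\bB}+\bD_\lambda)^{-1}\bD_\lambda\bz^* - (\widetilde{\bB}+\bD_\lambda)^{-1}\widetilde{\bA}^\top\bbeta\bigr].
\end{equation*}
Lemma~\ref{lem_DB} applied with $\bv = \bz^*$ bounds the first summand by $\mathcal{C}_2\|\bz^*\|_{\bD}/\sigma_{\min}(\bL_{\bz})$. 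For the noise summand, symmetrizing via $(\widetilde{\bB}+\bD_\lambda)^{-1} = \bL_{\bz}^{-1}(\bL_{\bz}^{-\top}\widetilde{\bB}\bL_{\bz}^{-1}+\lambda^2\bI)^{-1}\bL_{\bz}^{-\top}$ gives the operator bound $(\lambda^2\sigma_{\min}(\bL_{\bz})^2)^{-1}$, which together with $\|\widetilde{\bA}\|_2\le\sigma_{\max}(\bA)$ produces the contribution $\sigma_{\max}(\bA)\|\bbeta\|_2/(\lambda^2\sigma_{\min}(\bL_{\bz})^2)$.

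Assembling: multiplication by the coupling factor $\sigma_{\min}(\bL_{\bz})\mathcal{C}_1$ converts the Lemma output into $\mathcal{C}_1\mathcal{C}_2\|\bz^*\|_{\bD}$ and the noise piece into the first summand of $\mathcal{C}_3\|\bbeta\|_2$; together with the residual $\gamma\|\bbeta\|_2/(\gamma^2+\alpha^2)$ from the coefficient error, this completes $\mathcal{C}_3\|\bbeta\|_2$ and matches \eqref{eq:bound_warm}, once $\|\bz^*\|_{\bD}$ is identified with $\Gamma(\bx^*,\text{Span}(\widehat{\bx}_{\text{nn}})^\bot)\|\bx^*\|_{\bD}$ through the definition \eqref{Gamma} (using that $\bz^*$ is the $\ell_2$-projection of $\bx^*$ onto $\text{Span}(\widehat{\bx}_{\text{nn}})^\bot$). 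The main difficulty I anticipate is purely bookkeeping: tracking how the coupling factor $\sigma_{\min}(\bL_{\bz})\mathcal{C}_1$ redistributes across the three stated terms so that $\mathcal{C}_1$, $\mathcal{C}_2$ and $\mathcal{C}_3$ emerge cleanly in the final bound.
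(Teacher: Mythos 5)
Your proposal follows essentially the same route as the paper's proof: the same orthogonal split $\bx^*-\bx_w=(\bz^*-\bz_w)+(c^*-c_w)\widehat{\bx}_{\text{nn}}$ with the triangle inequality, the same substitution giving $c^*-c_w$ in terms of $c^*$, $\bz^*-\bz_w$ and the noise, the same resolvent identity reducing $\bz^*-\bz_w$ to a $(\widetilde{\bB}+\bD_\lambda)^{-1}\bD_\lambda\bz^*$ term handled by Lemma~\ref{lem_DB} plus a noise term, and the same final assembly into $\mathcal{C}_1,\mathcal{C}_2,\mathcal{C}_3$. The only cosmetic difference is that you bound the noise resolvent by a symmetrization/congruence argument where the paper invokes Weyl's inequality; both yield the operator bound $\bigl(\lambda^2\sigma_{\min}(\bD)\bigr)^{-1}=\bigl(\lambda^2\sigma_{\min}(\bL_\bz)^2\bigr)^{-1}$.
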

\begin{proof} 
Let $\widetilde{\bbeta} = (\bI - \by\by^\top)\bbeta$.
Then $\bz^*$ solves the following projected problem:
\begin{equation*}
    \widetilde{\bb} = \widetilde{\bA} (\bx^* - c^*\widehat{\bx}_{\text{nn}}) + \widetilde{\bbeta} =\widetilde{\bA}\bz^* + \widetilde{\bbeta}.
\end{equation*}
The error of the WB-IPM solution $\bx^*$ is given by
$\bx^* - \bx_{w} = \bz^* - \bz_w + (c^* - c_w)\widehat{\bx}_{\text{nn}}.$
Since $\bz^* - \bz_w \; \bot \; \widehat{\bx}_{\text{nn}}$, we have
\begin{align}
\label{xxw}
    \norm{\bx^* - \bx_{w}}_2 \le \norm{\bz^* - \bz_w}_2 + | c^* - c_w |.
\end{align}
%%%
We first estimate $c^* - c_w$. 
By \eqref{z_solu2} and \eqref{thm:error_bounds_eq1}, we have 
\begin{equation*}
    c_w   = \frac{\gamma^2 c^* + \gamma \by^\top\bA(\bz^* - \bz_w) + \gamma \by^\top \bbeta}{\gamma^2 + \alpha^2},
\end{equation*}
and thus obtain
\begin{equation}
\label{eq_ccw}
c^*- c_w  = \frac{\alpha^2}{\gamma^2 + \alpha^2}c^* - \frac{\gamma\by\top\bA}{\gamma^2 + \alpha^2}(\bz^* - \bz_w) - \frac{\gamma\by\top\bbeta}{\gamma^2 + \alpha^2}.
\end{equation}
Thus we obtain from \eqref{eq_ccw} that
\begin{equation}
\label{eq:final_err}
\begin{split}
    | c^* - c_w | &\le \frac{\alpha^2}{\gamma^2 + \alpha^2}|c^*| 
     + \frac{\gamma \| \bA^\top \by \|_2}{\gamma^2 + \alpha^2}\norm{\bz^* - \bz_w}_2
     +\frac{\gamma}{\gamma^2 + \alpha^2} \| \bbeta \|_2 . \\ %\bbeta^\top \by\by^\top \bbeta
        \end{split}
\end{equation}
To estimate $\| \bz^* - \bz_w \|$,
we use 
\begin{equation}
    \label{eq:error_vc_wb-ipm}
\begin{split}
   &  \bz^* - \bz_w 
    =  \bz^* - ( I - \hat{\bx}_{\text{nn}}\hat{\bx}^\top_{\text{nn}} ) (\widetilde{\bB} + \bD_{\lambda})^{-1}\widetilde{\bA}^\top\widetilde{\bb}  \\
    = & ( I - \hat{\bx}_{\text{nn}}\hat{\bx}^\top_{\text{nn}} )\left(\bI - (\widetilde{\bB} + \bD_{\lambda})^{-1}\widetilde{\bB}\right)\bz^* 
     - ( I - \hat{\bx}_{\text{nn}}\hat{\bx}^\top_{\text{nn}} ) (\widetilde{\bB} + \bD_{\lambda})^{-1}\widetilde{\bA}^\top \widetilde{\bbeta}  \\
    = & \underbrace{ ( I - \hat{\bx}_{\text{nn}}\hat{\bx}^\top_{\text{nn}} ) (\widetilde{\bB} + \bD_{\lambda})^{-1} \bD_{\lambda}\bz^* }_{\mathfrak{a}} 
    - \underbrace{ ( I - \hat{\bx}_{\text{nn}}\hat{\bx}^\top_{\text{nn}} ) (\widetilde{\bB} 
    +  \bD_{\lambda})^{-1}\widetilde{\bA}^\top \widetilde{\bbeta} }_{\mathfrak{b}}.
\end{split}
\end{equation}
We then estimate these two terms individually.
For $\mathfrak{a}$, by Lemma \ref{lem_DB}, 
\begin{equation}
\label{eq:est_a}
\norm{\mathfrak{a}}^2_2 \le 
\norm{(\widetilde{\bB} + \bD_{\lambda})^{-1} \bD_{\lambda}\bz^*}_2^2 
 \le \frac{ \| \bz^* \|^2_{\bD}}{\sigma_{\text{min}}(\bD)} \left( \Gamma(\bz^*;\mathcal{K})
 +  \frac{\lambda^4( 1- \Gamma(\bz^*;\mathcal{K}))}{(\lambda^2+\theta_{+})^2} \right).
\end{equation}
By Weyl's inequality,
\[
\sigma_{\min}(\widetilde{\bB} + \bD_{\lambda}) \geq \sigma_{\min}(\widetilde{\bB}) + \sigma_{\min}(\bD_{\lambda}) \ge \lambda^{2}{\sigma_{\text{min}}(\bD)}.
\]
Then
\[
\|(\widetilde{\bB} + \bD_{\lambda})^{-1}\widetilde{\bA}^{\top}\widetilde{\bbeta}\|_{2} 
\leq \frac{\sigma_{\max}(\widetilde{\bA})}{\sigma_{\min}(\widetilde{\bB} + \bD_{\lambda})} \|\widetilde{\bbeta}\|_{2} 
\leq \frac{\sigma_{\max}(\widetilde{\bA})}{\lambda^{2}\sigma_{\text{min}}(\bD)} \|\widetilde{\bbeta}\|_{2}.
\]
With $\|\widetilde{\bbeta}\| \le \| \bbeta \|$, there holds 
\begin{align}
\label{eq:est_b}
    \norm{\mathfrak{b}}_2 
\leq \frac{\sigma_{\max}(\widetilde{\bA})}{\lambda^{2}\sigma_{\text{min}}(\bD)} \|{\bbeta}\|_{2},
\end{align}
By combining \eqref{eq:est_a} and \eqref{eq:est_b} with \eqref{eq:error_vc_wb-ipm}, 
we obtain the estimate for $\norm{\bz^* - \bz_\omega}_2$:
\begin{align}
\label{eq:z_err}
    \norm{\bz^* - \bz_\omega}_2  \le &
   \frac{1}{\sigma_{\text{min}}(\bL_{\bz})} \left( \Gamma(\bz^*;\mathcal{K})
 +  \frac{\lambda^4}{(\lambda^2+\theta_{+})^2} ( 1- \Gamma(\bz^*;\mathcal{K})) \right)^{1/2} \| \bz^* \|_{\bD} \\
    &+\frac{\sigma_{\max}(\widetilde{\bA})}{\lambda^{2}\sigma_{\text{min}}(\bD)} \|{\bbeta}\|_{2}. \nonumber 
\end{align}
Note that $\| \bz^* \|_{\bD} = \Gamma(\bx^*, \text{Span}(\hat{\bx}_{nn})^{\bot}) \| \bx^* \|_{\bD}$.
Putting it into \eqref{eq:final_err} gives the estimate for $c^*-c_w$,
which finishes the proof by \eqref{xxw}. 
\end{proof}

\begin{remark}
\begin{itemize}
\item[]

\item The error bound only depends on the true solution $\bx^*$, the neural-network approximation $\widehat{\bx}_{nn}$, the noise and the regularization parameters,
with all the terms being well-controlled, 
thereby avoiding the situation in Figure \ref{fig:ws_wb_compare} that a simple warm start approach ultimately fails to improve.
\item Note that both $\mathcal{C}_2$ and $\Gamma(\bx^*, \emph{Span}(\hat{\bx}_{nn})^{\bot})$ are upper bounded by $1$. Since $\widetilde{\bA}\bz^*=\widetilde{\bA}\bx^*$ and $\widetilde{\bA}\bx^*$ is far away from $0$ in practice, it is likely $\Gamma(\bz^*;\mathcal{K})\ll 1$.
 Thus, $\mathcal{C}_2$ is close to $ \frac{\lambda^2}{\lambda^2+\theta_{+}}$. 
 Its smallness is then controlled by $\theta_+$, the smallest non-zero eigenvalue of the preconditioned matrix $\bD^{-1}\widetilde{\bB}$.
 \end{itemize}
\end{remark}

The first term in the error bound \eqref{eq:bound_warm} is small because the regularization parameter $\alpha$ is small and
$\gamma = \|\bA \widehat{\bx}_{\text{nn}}\|_2$ is large.
In fact, compared with $\lambda$,
the effective  $\alpha$ along $\bx_{\text{nn}}$ is much weaker and even negligible, since this direction is nearly aligned with the true solution.
See \cref{fig:alpha_changing} for the detailed comparison.

Since the first term in the error bound is very small, the total error may be dominated by the second term. Here, $\Gamma(\bx^*, \text{Span}(\hat{\bx}_{nn})^{\bot})$ measures how close $\bx^*$ is to the direction of $\widehat{\bx}_{nn}$. This suggests a novel loss function based on their angle:
\begin{equation}
\label{eq_angleLoss}
\mathcal{L}_{\text{angle}}(\bb;\theta)
:= 1 - (\bx^*)^\top\mathcal{N}(\bb;\theta)/(\|\bx^*\|\|\mathcal{N}(\bb;\theta)\|),
\end{equation}
for training the neural networks, rather than the usual $\ell^2$ distance function:
\begin{equation}
\label{eq_distLoss}
\mathcal{L}_{\text{dist}}(\bb;\theta):= \| \mathcal{N}(\bb;\theta) - \bx^* \|^2_2.
\end{equation}

By design, \eqref{eq_angleLoss} is weaker than \eqref{eq_distLoss}. Their training behaviors are compared in Figure~\ref{fig:convergence_history}. The angle-loss converges much faster: after 200 epochs, its loss drops to 
7.6\%($=1-92.4$\%) of the initial value, versus 
27.2\%($=1-72.8$\%) for the distance loss. 
As expected, the network trained with the weaker angle loss yields a poorer standalone prediction than the one trained with the stronger distance loss. Remarkably, the WB-IPM iterations initialized by these two predictions exhibit nearly identical performance, i.e,, they have the same final error and comparable convergence speed. This feature is highly desirable, since one can train with the weaker loss to cut training cost, without sacrificing downstream reconstruction quality after WB-IPM.

\begin{figure}[h]
    \centering
    \includegraphics[width=0.6\linewidth]{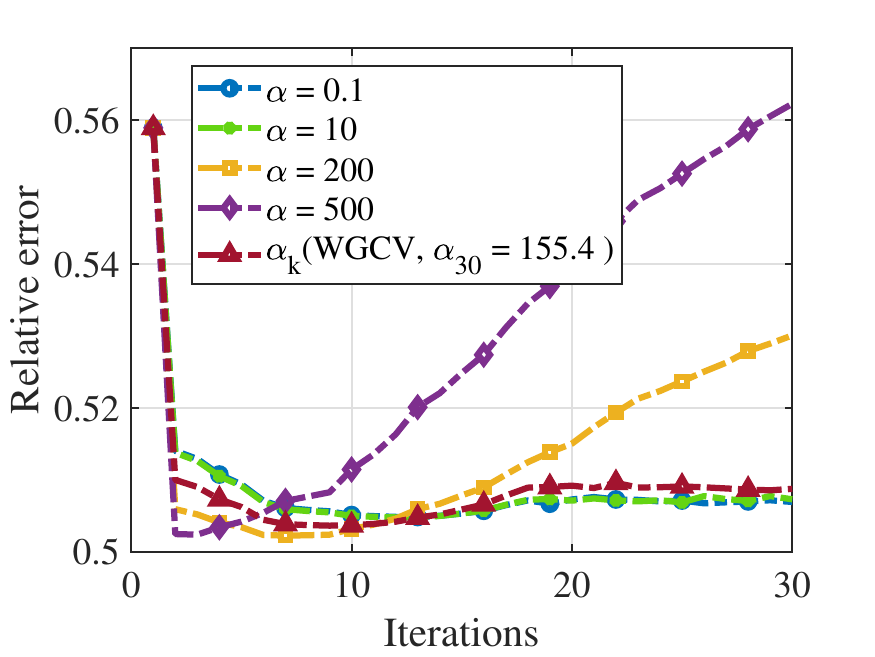}
    \caption{Relative reconstruction error of the experimental case for fixed $\alpha$ ($\alpha\in\{0.1,10,200,500\}$) and an iteration-adaptive choice $\alpha_k$ via WGCV (red triangles; $\alpha_{30}=155.4$). A small $\alpha$ achieves errors comparable to the WGCV schedule, whereas a large $\alpha$ over-regularizes and progressively degrades accuracy. }
    \label{fig:alpha_changing}
\end{figure}

\begin{figure}[h!]
  \centering
  \begin{subfigure}{0.5\linewidth}
    \centering
    \includegraphics[width=\linewidth]{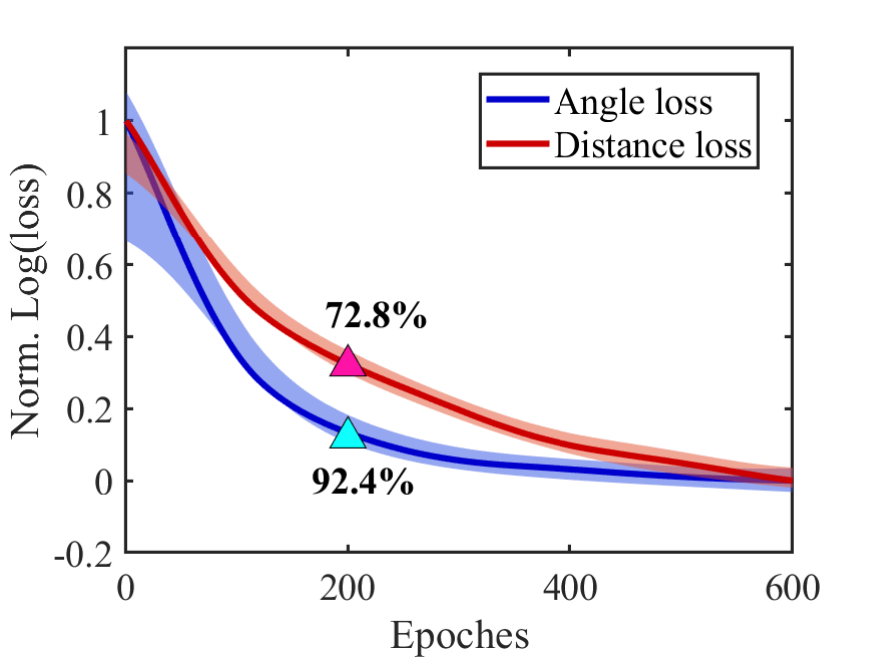}
    % \label{fig:conv:train}
  \end{subfigure}\hfill
  \begin{subfigure}{0.5\linewidth}
    \centering
    \includegraphics[width=\linewidth]{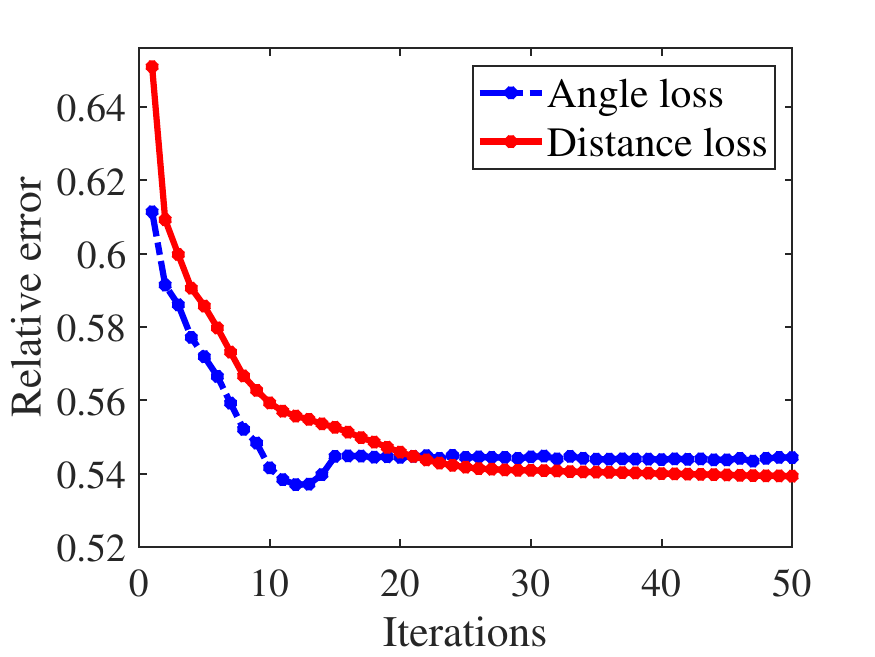}
    % \label{fig:conv:wbipm}
  \end{subfigure}
  \caption{Left: the training history of the angle and distance loss shows that the convergence of angle loss is much faster.
    Right: the convergence history of WB-IPM based on predictions from the neural network based on the two different loss functions.}
  \label{fig:convergence_history}
\end{figure}

\section{Numerical experiment}\label{sec:results} In this section, we evaluate the proposed WB-IPM on both numerical (Subsection \ref{sec:numerical_results}) and experimental (Subsection \ref{sec:exp_results}) 3D FMT problems. The forward problem setup and data generation approach are detailed in Subsection \ref{sec:U-Net}. Our method can  achieve higher accuracy and robustness, particularly along $z-$ axis, compared to the original flexible hybrid projection method (denoted by \texttt{fHybr}) \cite{chung2019flexible} and the pure OpL output (Subsection \ref{sec:aunet}). 
Throughout WGCV is used to select regularization parameters, while the regularization parameter along the warm basis can be quite flexible as shown in \cref{fig:ws_wb_compare}.

\subsection{Forward problem and data generation\label{sec:U-Net}}
We first describe the forward model of FMT used to generate synthetic data for training and evaluation.
In FMT, at near-infrared wavelengths, photon transport in biological tissue is well approximated by coupled diffusion equations \cite{arridge1995photon1, arridge1995photon2, arridge1999optical} on a bounded domain $\Omega \subseteq \mathbb{R}^3$:
\begin{align}
[-\nabla\cdot \kappa^{\text{ex}}(\br)\nabla  + \mu_a^{\text{ex}}(\br)]\phi^{\text{ex}}(\br) = q^{\text{ex}}(\br), &\quad \br\in \Omega \label{eq:FMT_forward_1} \\
\phi^{\text{ex}}(\br) + 2\Gamma(\rho)\kappa^{\text{ex}}(\br)\partial_\nu\phi^{\text{ex}}(\br) = 0, &\quad \br \in \partial\Omega \label{eq:FMT_forward_2} \\
[-\nabla\cdot \kappa^{\text{em}}(\br)\nabla  + \mu_a^{\text{em}}(\br)]\phi^{\text{em}}(\br) = \eta x(\br) \phi^{\text{ex}}(\br),&\quad \br\in \Omega \label{eq:FMT_forward_3} \\
\phi^{\text{em}}(\br) + 2\Gamma(\rho)\kappa^{\text{em}}(\br)\partial_\nu \phi^{\text{em}}(\br) = 0, &\quad \br \in \partial\Omega
\label{eq:FMT_forward_4}
\end{align}
where $\nu$ is the outward normal to the boundary $\partial \Omega$, superscripts ``ex'' and ``em'' represent excitation and emission, $\phi^{\text{ex}} (   \phi^{\text{em}})$ is the photon density,
 $\mu_a^{\text{ex}}(   \mu_a^{\text{em}})$ and $\kappa^{\text{ex}} (  \kappa^{\text{em}})$  are absorption and diffusion coefficients,
$\rho$ is the light speed, $\Gamma(\rho)$ models refractive index mismatch,
$q^{\text{ex}}$ is the excitation source,
$\eta$ is the efficiency constant, and $x(\br)$ is the fluorophore distribution to be reconstructed.

The forward map is constructed from \eqref{eq:FMT_forward_1}–\eqref{eq:FMT_forward_4} and discretized using FEM \cite{schweiger1997finite,schweiger2014toast++,ren2019smart} to compute excitation and emission fields for various fluorescence distributions $\bx$. 
The training dataset consists of pairs $(\bI,\mathbf{P}_d\Phi^{\mathrm{em}})$, where $\bI$ represents $1$–$3$ random inclusion and $\mathbf{P}_d$ projects the emission field to detector measurements.
The simulated phantom is modeled as a $54 \times 54 \times 14 \text{ mm}^3$ slab, illuminated by a $10 \times 10$ laser grid and measured on a $55 \times 55$ detector array (see \cref{fig:FMT_intro}(b)). 

\subsection{Simulation results\label{sec:numerical_results}} 
We present three simulated cases, visualized as $z$-axis slices on a $55\times55\times15$ grid in \cref{fig:simulated_res}.
For all three cases, Attention U-Net provides good depth localization but less accurate shape recovery, while fHybr yields precise shapes but limited depth resolution. 
This is illustrated by the first and fourth rows in \cref{fig:simulated_res} as well as the Maximum Intensity Projection (MIP) images on the two sides in  \cref{fig:simulated_3D}. 
WB-IPM combines these strengths to deliver clear boundaries, minimal artifacts, and accurate recovery of both shape and depth. In the most complex case (Case 3), it reconstructs ellipsoids at distinct depths with superior volumetric accuracy. 3D visualizations and MIP images for case 3 (\cref{fig:simulated_3D}) further demonstrate its superiority in boundary delineation and artifact suppression.

\begin{figure}[ht!]   
   \includegraphics[width = 0.9\textwidth]{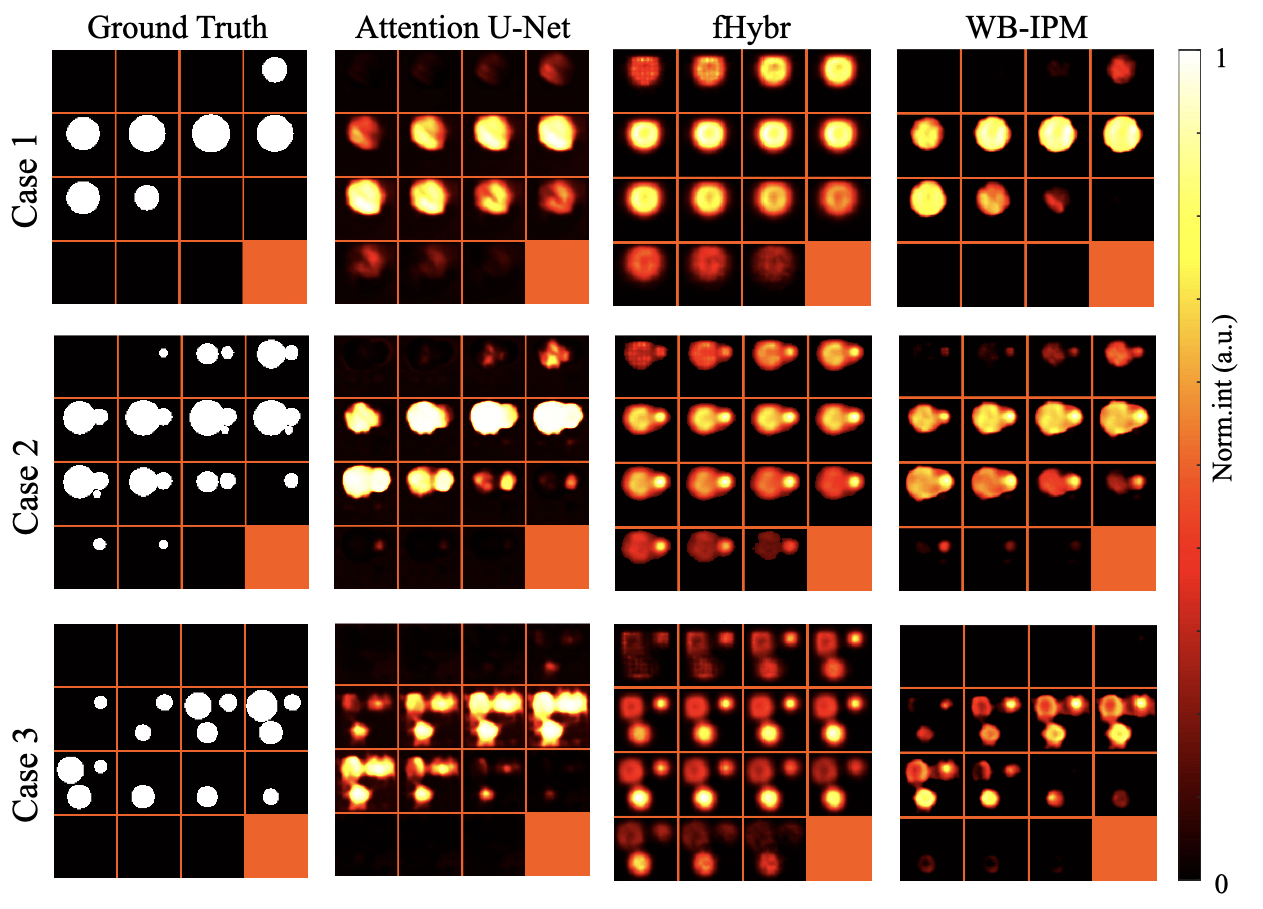}
   \centering
    \caption{Three simulated cases: reconstructions from Attention U-Net, fHybr, and WB-IPM, with slices along the $z$-axis.
    fHybr yields wrong results in the first and fourth rows.}
    \label{fig:simulated_res}
\end{figure}

\begin{figure}[ht!]   
   \includegraphics[width = 1\textwidth]{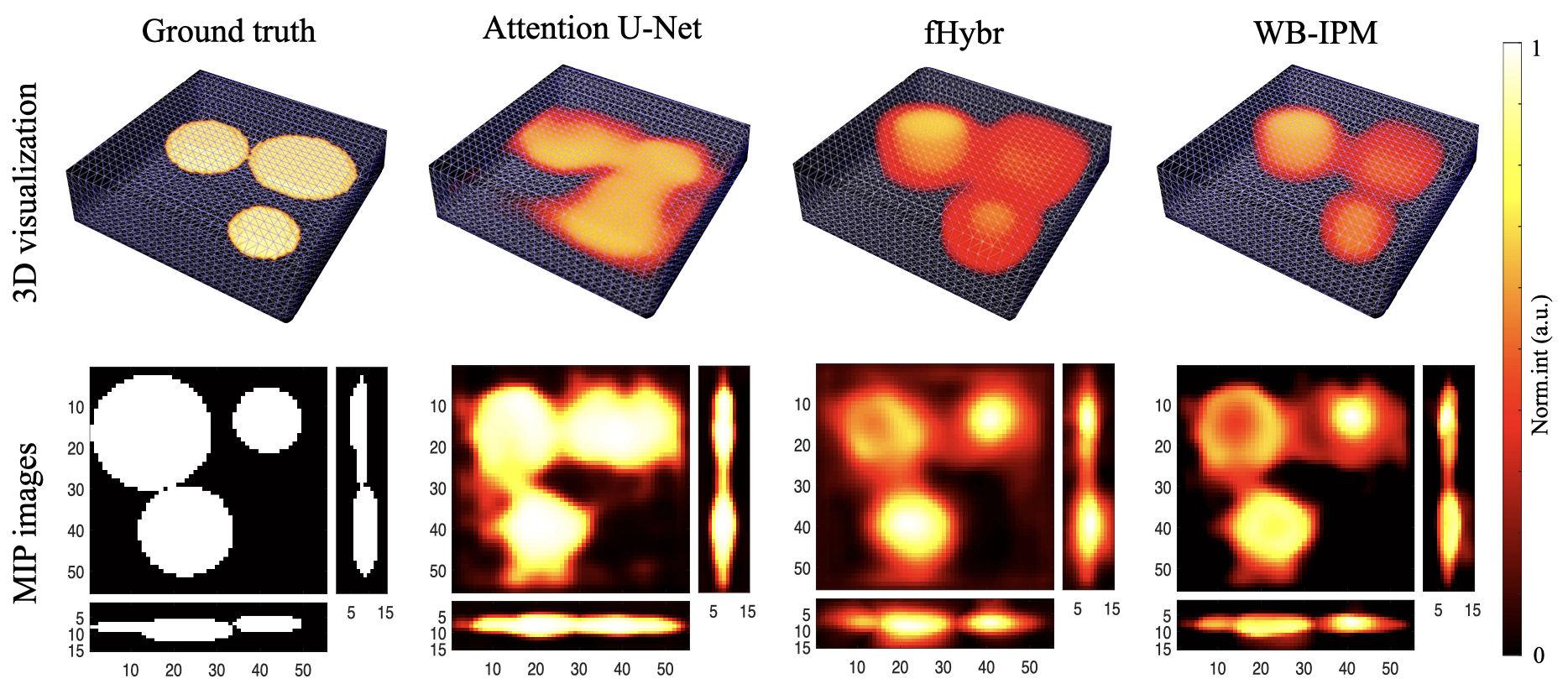}
   \centering
    \caption{3D visualizations (1st row) and MIP images (2nd row) for Case 3 of \cref{fig:simulated_res} reconstructed using  Attention U-Net, fHybr, and WB-IPM.
    }
    \label{fig:simulated_3D}
\end{figure}

\begin{figure}[ht!]   
   \includegraphics[width = 0.7\textwidth]{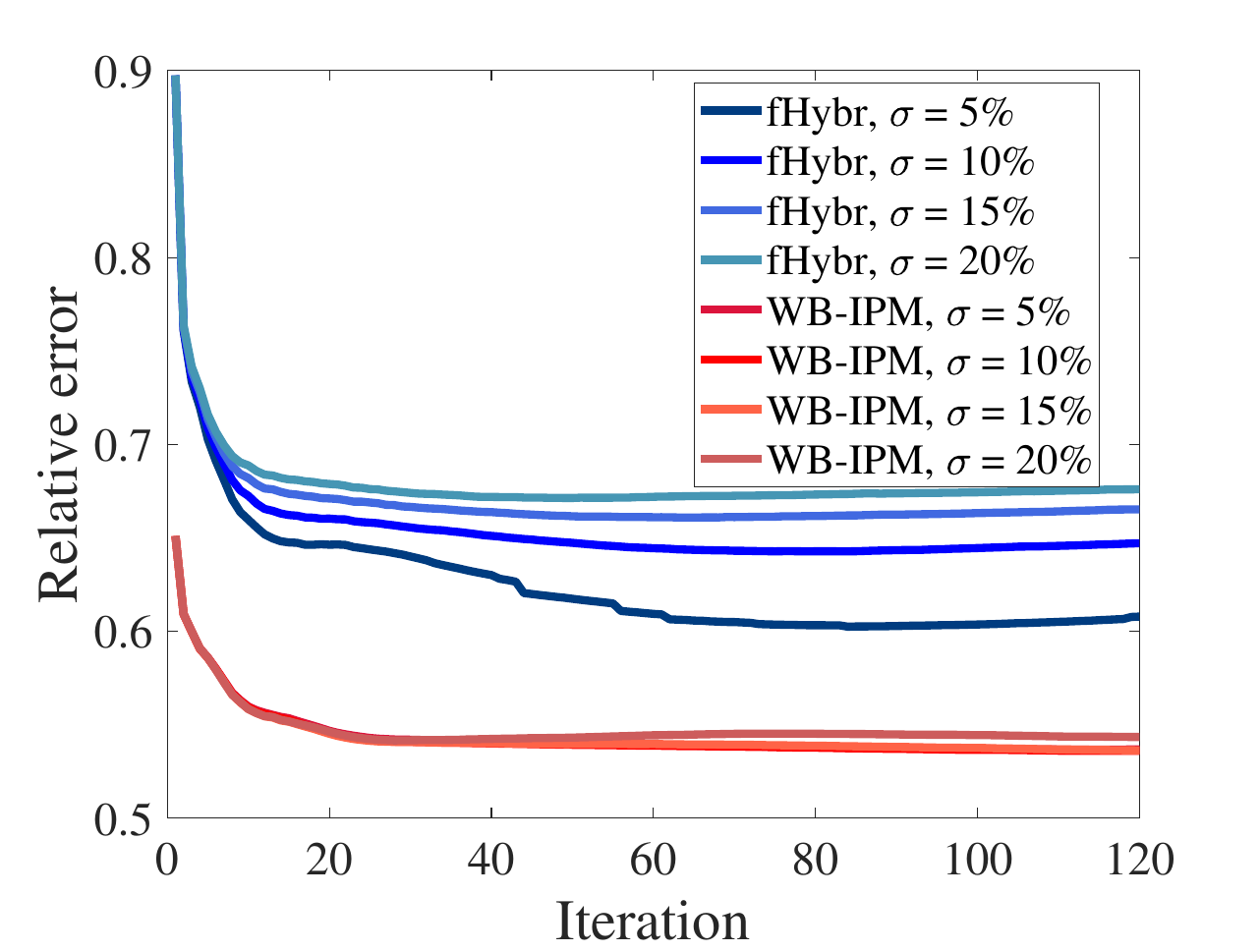}
   \centering
   \caption{Average relative errors across 50 simulated test datasets under noise levels of 5$\%$, 10$\%$, 15$\%$ and 20$\%$. Upon convergence, WB-IPM achieves relative error reductions of 9.54$\%$, 15.75$\%$, 17.95$\%$ and 18.96$\%$, compared to fHybr, with respect to noise levels from low to high. At 5\% noise, WB-IPM converges 2.5$\times$ faster (146.7 s for fHybr).
   $\sigma = \frac{\|\bbeta\|_2}{\|\bA \bx^*\|_2}$ denotes the noise level.}
    \label{fig:simu_error}
\end{figure}

\begin{figure}[ht!]   
   \includegraphics[width = 0.8\textwidth]{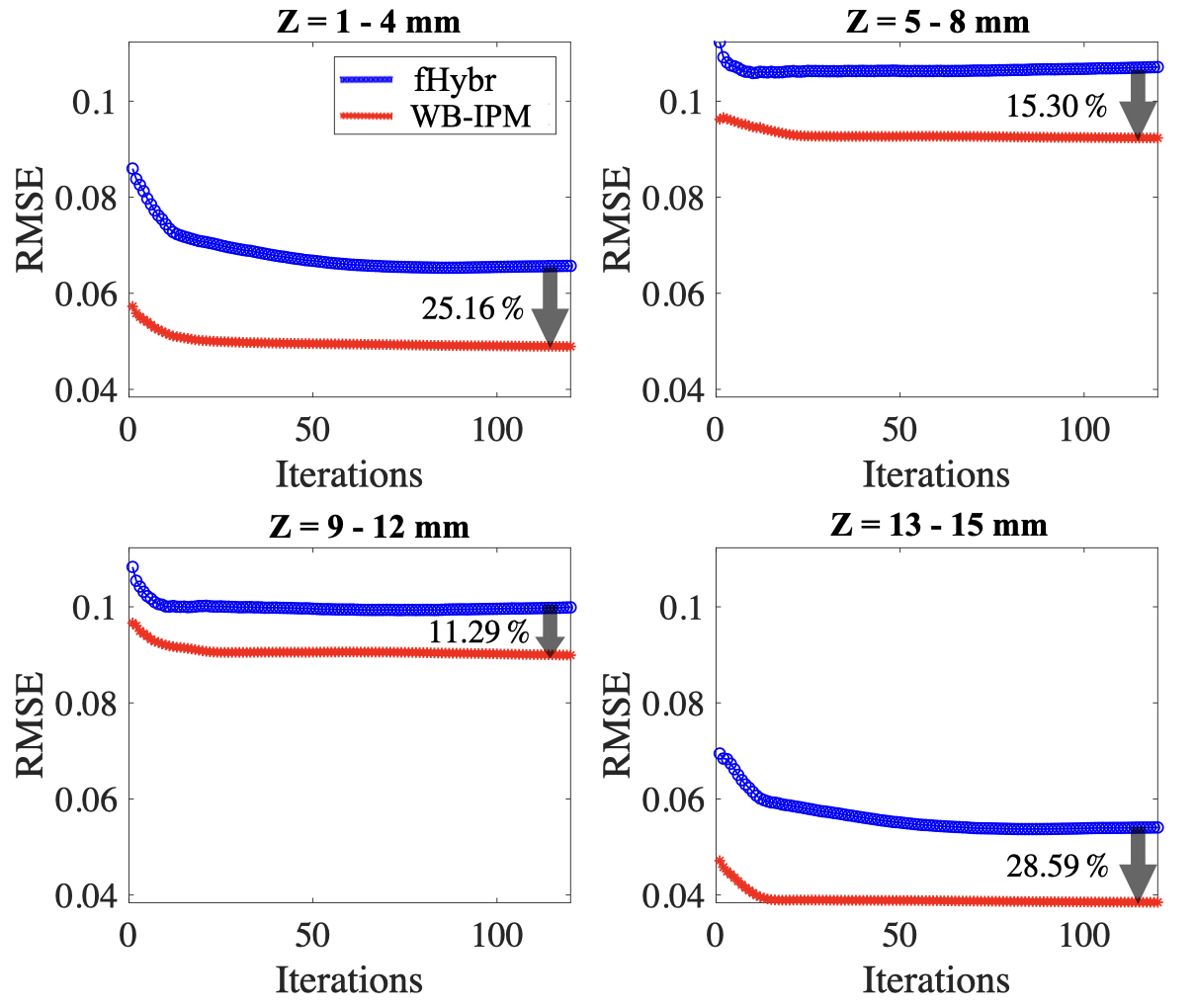}
   \centering
    \caption{Average RMSE across four $z$-axis sections for 50 simulated test cases with 10\% noise. After 120 iterations, WB-IPM reduces error compared to fHybr by 25.16\%, 15.30\%, 11.29\%, and 28.59\% along the $z$-axis. }
    \label{fig:z_resolution}
\end{figure}

 \cref{fig:simu_error} and \cref{fig:z_resolution} show that WB-IPM is highly robust to noise, with superior $z$-axis resolution and overall accuracy.
It consistently achieves the low relative error across noise levels (5–20\%), remaining below $0.53$ even at 20\% noise (\cref{fig:simu_error}), while fHybr suffers from significant degradation. Since the ground truth is nearly zero at the top and bottom slices along $z-$axis, we compute the average root mean squared error (RMSE) instead of relative errors across $50$ simulated test datasets and observe that WB-IPM outperforms fHybr by 25.16\% and 28.59\% in the boundary regions ($z=1$–4 mm and $z=13$–15 mm), resulting in clearer depth localization and finer structural recovery (\cref{fig:z_resolution}). The RMSE improvements in \cref{tab:error_results} further confirm these findings.
Remarkably, WB-IPM attaining higher accuracy and robustness even with only 20 iterations.

\begin{table}[ht!]
    \centering
    % \caption{RMSE improvement of WB-IPM methods}
   \caption[RMSE improvement (short)]{%
  \textbf{RMSE improvement of WB-IPM methods in the simulation study.}
  Average RMSE improvement at iterations $k = 20, 50, 120$ across
  four sections along the $z$-axis, under noise levels of 5\%, 10\%, 15\%, and 20\%. }
    \label{tab:error_results}
    % \vspace{1em}
    \footnotesize{
    \begin{tabular}{cc|c|ccc}
        \toprule
        \multirow{2}{*}{\textbf{$z$-axis}} & \multirow{2}{*}{\textbf{Noise}} & \multirow{2}{*}{\textbf{fHybr}}  &  \multicolumn{3}{c}{\textbf{WB-IPM (\%)}} \\
        \cmidrule(lr){4-6} 
        \textbf{(mm)}& \textbf{Level}& \textbf{(RMSE)}& $k=20$ &  $k=50$ &  $k=120$ \\
        \midrule
        \multirow{4}{*}{$1-4$} & 5\%& 0.0581 
        & 14.01 &  14.11 &  14.20\\ 
        & 10\%&  0.0654  & 23.26&   24.25&   25.16\\
        & 15\%& 0.0686  & 26.27&   27.59&   28.09\\
        & 20\%& 0.0705 & 28.04&   29.18&  29.27 \\
        \midrule
        \multirow{4}{*}{$5-8$} & 5\%  &  0.1062  &12.25 &   12.43 &  12.47\\
        & 10\%& 0.1090  & 14.64&   14.98&   15.30\\
        & 15\%& 0.1102  & 15.58&  15.94&  16.35\\
        & 20\%& 0.1111  & 16.15&  16.64&  17.21 \\
        \midrule
        \multirow{4}{*}{$9-12$} & 5\%& 0.0980  & 6.78 &6.96 & 7.07 \\
        & 10\%& 0.1014 & 10.40&   10.66&  11.29\\
        & 15\%& 0.1028  & 11.72&  12.37&  12.96 \\
        & 20\%& 0.1042  & 12.62&  13.42&  14.16\\
        \midrule
        \multirow{4}{*}{$13-15$} & 5\%    & 0.0465 & 16.62 & 16.77&   16.79 \\
        & 10\%& 0.0539  & 27.74&   27.86&   28.59\\
        & 15\%& 0.0569  & 30.81&  30.92&  31.13\\
        & 20\%& 0.0585 &  31.84&   31.65&   31.44 \\

        \midrule
       \multirow{4}{*}{Overall} & 5\%&  0.0743 &  12.13& 12.48& 12.66  \\
        & 10\%&  0.0844  & 19.01&   19.44&   20.09\\
        & 15\%&  0.0865  & 21.09&   21.70&  22.13\\
        & 20\%&  0.0879 &  22.16&   22.72&   23.01 \\

        \bottomrule 
    \end{tabular}}
\end{table}

\subsection{Experimental results\label{sec:exp_results}}
To evaluate our method in practice, we conducted experiments with a silicone slab phantom designed to mimic biological tissue (\cref{fig:exp_error}, left).  The phantom was prepared with silicone mixed with TiO$_2$ and carbon black to reproduce scattering and absorption properties and included a peanut-shaped fluorescent component containing Cy5 dye at $0.0243$ µmol/ml. Measurements were acquired using a multifunctional FMT system \cite{gao2023multifunctional} in transmission mode, with a $10\times10$ laser grid on the bottom surface and a $55\times55$ detector grid on the top surface, consistent with the simulated setup.

\begin{figure}[ht!]   
   \includegraphics[width = 1\textwidth]{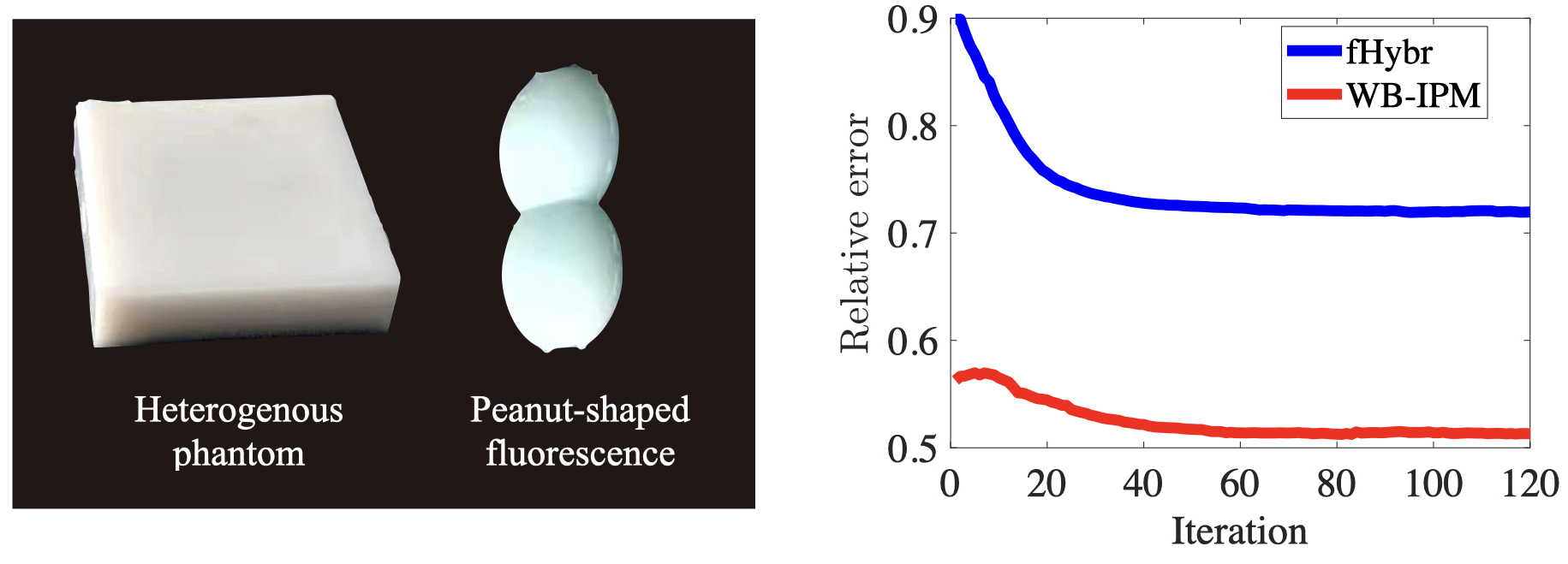}
   \centering
    \caption{Left: Silicone slab phantom with a central peanut-shaped fluorophore.
    Right: Relative errors for experimental study. WB-IPM consistently outperforms fHybr with 27.8\% lower error.}
    \label{fig:exp_error}
\end{figure}

\begin{figure}[ht!]   
   \includegraphics[width = 1\textwidth]{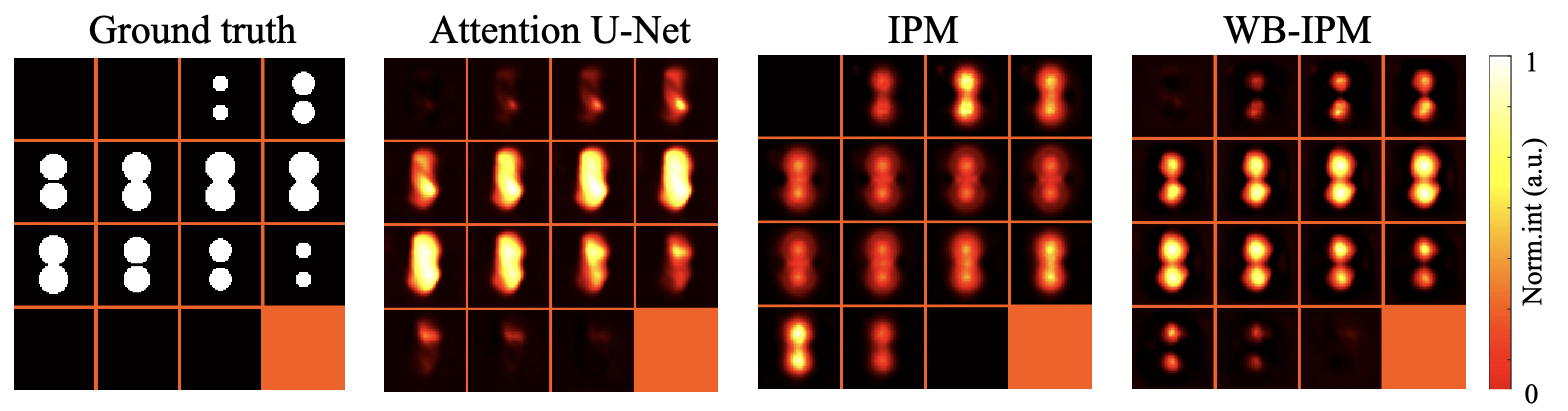}
   \centering
    \caption{Results of real silicone slab phantom, obtained using Attention U-Net, fHybr, and WB-IPM, with slices depicted along the $z$-axis.}
    \label{fig:exp_res}
\end{figure}

\cref{fig:exp_res} shows reconstructions from experimental measurements.
 Attention U-Net captures the general inclusion shape and $z$-axis localization but produces blurred boundaries and misses fine structural details, such as the connection between the two ellipsoids. 
fHybr recovers partial shape details and the connection structure but suffers from poor depth localization: the brightest regions shift toward the ends of the $z$-axis, and axial slices fail to reflect the true morphological transitions.
Similar to simulated case studies, WB-IPM yields reconstructions closest to the ground truth, with sharp boundaries, minimal artifacts, and substantially improved depth accuracy. The 3D visualizations and MIP images (\cref{fig:exp_3D}) indicate that only WB-IPM restores depth, shape, and edge details with high fidelity.

\begin{figure}[ht!]   
   \includegraphics[width = 0.8\textwidth]{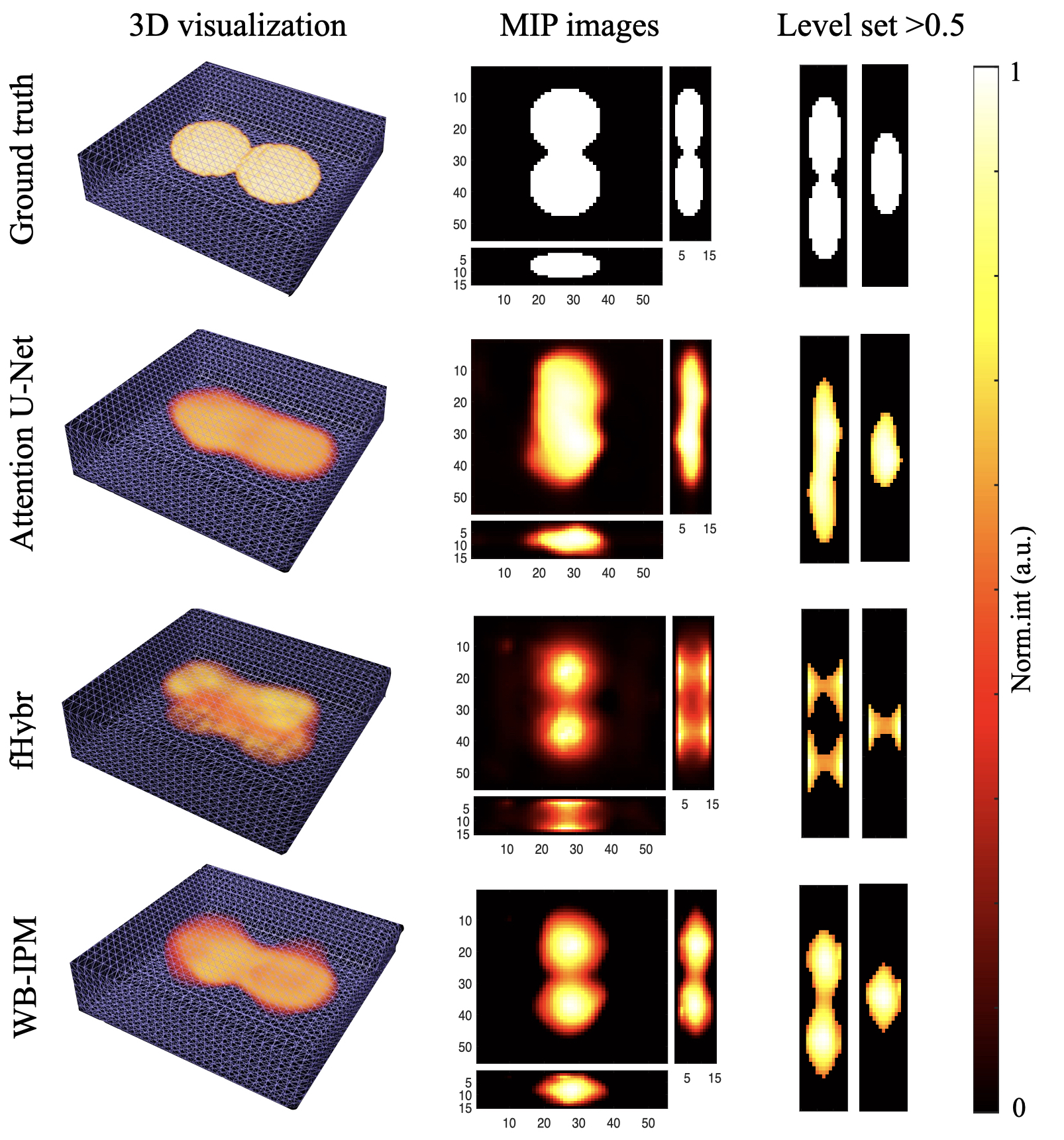}
   \centering
    \caption{ Comparative visualization of experimental reconstructions (\cref{fig:exp_res}) by various methods, including Attention U-Net, fHybr, and WB-IPM. Left: 3D renderings with FEM mesh; Center: MIP images; Right: side-view level-set images of the MIP at threshold 0.5.}
    \label{fig:exp_3D}
\end{figure}

For  quantitative evaluation, we track the relative error across iterations (\cref{fig:exp_error}, right) and report RMSE improvements along the $z$-axis in \cref{tab:exp_error_results}. The Attention U-Net already achieves a 27.8\% lower error than the final fHybr result, underscoring its stronger $z$-axis representation (\cref{fig:exp_res}). Building on this initialization, WB-IPM further refines the solution, reaching a final relative error of about 0.51. The results in \cref{tab:exp_error_results} show it yields consistent error reductions across all $z$-sections and an overall 28.01\% RMSE improvement over fHybr. These results indicate that our method remains reliable in practical problem settings.

% For quantitative evaluation, we compute the relative error progression across iterations of various reconstruction methods (see \cref{fig:exp_error}). The initial points of the S-WB-IPM and WB-IPM curves represent the errors of the Simple U-Net and Attention U-Net predictions, respectively. The curves reveal that WB-IPM achieves a notable improvement, reducing the relative error by approximately 0.045 compared to the original Attention U-Net output, 0.14 compared to the S-WB-IPM result, and 0.2 compared to the fHybr result. Meanwhile, based on the detailed RMSE improvements across varying Z-range, ultimately, WB-IPM delivers the most accurate reconstruction, with the lowest relative error of approximately, and enhanced 28.01\% of RMSE compared withfHybr, a result that aligns with the quantitative conclusions drawn from the simulated data. 

\begin{table}[ht!]
    \centering
    % \caption{RMSE improvement of WB-IPM methods}
   \caption[RMSE improvement (short)]{%
  \textbf{RMSE improvement of WB-IPM methods in the experimental study.}
    Average RMSE improvement at iterations $k = 20, 50, 120$ across four sections along the $z$-axis. The noise is unknown is this case. }
    \label{tab:exp_error_results}
    \vspace{1em}
    \small{
    \begin{tabular}{c|c|ccc}
        \toprule
        \multirow{2}{*}{\textbf{$z$-axis}} & \multirow{2}{*}{\textbf{fHybr}} & \multicolumn{3}{c}{\textbf{WB-IPM (\%)}} \\
        \cmidrule(lr){3-5} 
        \textbf{(mm)}& \textbf{(RMSE)}   & $k=20$ &  $k=50$ &  $k=120$ \\
        \midrule
       $1-4$  &    0.0752 & 0.67 &   1.72  &  2.75 \\
      $5-8$  & 0.1807  &   27.07 &   31.09   & 31.69\\
      $9-12$    &  0.1139  &   31.24  &  33.94  &  34.87\\
       $13-15$   &  0.0406 &   33.34  &  35.98  &  36.33 \\
        \midrule
       \multirow{1}{*}{Overall}  &   0.1067  & 24.21 &   27.36 &   28.01 \\
                 
        \bottomrule 
    \end{tabular}}
\end{table}

\section{Conclusion}
\label{sec:conclusion}
We have proposed WB-IPM for large-scale inverse problems, and illustrated its potential on FMT.
By integrating Attention U-Net predictions as a basis into the alternative solver in two subspaces and adopting the AFGK process to efficiently solve the subproblems, WB-IPM combines the strengths of learning and iteration: the network captures depth information, and the iterative solver refines it with stability and accuracy. Our analysis further establishes error bounds that depend only on the alignment between the true solution and the network output, apart from noise and regularization parameters.   Both simulation and experimental studies confirm that WB-IPM achieves more accurate, robust, and efficient reconstructions than either pure network predictions or standard iterative solvers, particularly in recovering depth information. 
 Remarkably, our approach allows training under a weaker loss for greater efficiency, without sacrificing the final accuracy after iteration.
 In practice,  WB-IPM also shows strong robustness to noise.

\appendix
\section{Efficient QR update of $\boldsymbol{Z}_k$\label{sec:eff-QR}}
Suppose we have already computed the thin QR factorization
\[
\bZ_k = \bQ_{Z,k}\bR_{Z,k},
\]
where $\bQ_{Z,k} \in \mathbb{R}^{N\times k}$ has orthonormal columns, and $\bR_{Z,k} \in \mathbb{R}^{k\times k}$ is upper triangular.
When a new column $\bz_{k+1} \in \mathbb{R}^N$ arrives from the $(k+1)$th iteration, we form the augmented matrix
\[
\bZ_{k+1} \;=\; \bigl[\bZ_k \;\;\; \bz_{k+1}\bigr].
\]
To update the factorization efficiently, we seek
\begin{equation}
\label{eq:Wkplus1}
\mathbf{Z}_{k+1}
\;=\;
\underbrace{\bigl[\,
\mathbf{Q}_{Z,k},\;
\tfrac{\bigl(\mathbf{I} - \mathbf{Q}_{Z,k}\mathbf{Q}_{Z,k}^{T}\bigr)\,\mathbf{z}_{k+1}}{r_{k+1,k+1}}
\bigr]}_{\displaystyle \mathbf{Q}_{Z,k+1}}
\underbrace{\begin{pmatrix}
\mathbf{R}_{Z,k} & \mathbf{Q}_{Z,k}^{T}\,\mathbf{z}_{k+1}\\[6pt]
\mathbf{0}^{T} & r_{k+1,k+1}
\end{pmatrix}}_{\displaystyle \mathbf{R}_{Z,k+1}},
\end{equation}
where $ r_{k+1,k+1} = \norm{\bigl(\mathbf{I} - \mathbf{Q}_{Z,k}\mathbf{Q}_{Z,k}^{T}\bigr)\,\mathbf{z}_{k+1}}_2$, $\mathbf{Q}_{Z,k+1}$ remains orthogonal  by appending the normalized residual and the $\mathbf{R}_{Z,k+1}$ extends $\mathbf{R}_{Z,k}$ while preserving its upper triangular structure. This procedure updates the QR factorization in $\mathcal{O}(Nk)$ operations, avoiding the need for a full QR decomposition at each iteration. To enhance numerical stability, one may employ modified Gram--Schmidt, a second orthogonalization pass, or Householder-based updates.

\bibliographystyle{siam}
\bibliography{references}

\end{document}